\providecommand{\U}[1]{\protect\rule{.1in}{.1in}}
\newtheorem{theorem}{Theorem}
\theoremstyle{plain}
\newtheorem{conjecture}{Conjecture}
\newtheorem{corollary}{Corollary}
\newtheorem{definition}{Definition}
\newtheorem{example}{Example}
\newtheorem{lemma}{Lemma}
\newtheorem{proposition}{Proposition}
\newtheorem{remark}{Remark}
\numberwithin{equation}{section}
\begin{document}
\title[Continuous path]{Stationary, Markov, stochastic processes with polynomial conditional moments
and continuous paths}
\author{Pawe\l \ J. Szab\l owski}
\address{Department of Mathematics and Information Sciences, \\
Warsaw University of Technology\\
ul Koszykowa 75, 00-662 Warsaw, Poland}
\email{pawel.szablowski@gmail.com}
\thanks{The author is very grateful to the unknown referee for pointing out misprints
and small mistakes as well as suggesting some simplifications in proofs. The
author is also grateful to the managing editor for indicating unknown to the
author positions of the literature.}
\date{May 2022}
\subjclass[2020]{[2020]Primary 60G10, 60G17 Secondary 60J35, 60G44}
\keywords{stationary Markov processes, conditional moments, Lancaster type expansion,
Gamma process, Ornstein-Uhlenbeck process, Arcsine distribution, Semicircle
distribution, Hermite, Laguerre, Chebyshev polynomials, Theta functions,
moments' sequences}

\begin{abstract}
We are studying stationary random processes with conditional polynomial
moments that allow a continuous path modification. Processes with continuous
path modification, are important because they are relatively easy to simulate.
One does not have to care about the distribution of their jumps which is
always difficult to find. Among those processes with the continuous path are
the Ornstein-Uhlenbeck process, the Gamma process, the process with Arcsin or
Wigner margins and the Theta functions as the transition densities and others.
We give a simple criterion for the stationary process to have a continuous
path modification expressed in terms of skewness and excess kurtosis of the
marginal distribution.

\end{abstract}
\maketitle

\section{Introduction}

Inspired by the excellent paper of Ray \cite{Ray56} we decided to return to
the problem of path continuity of the stationary Markov processes. In the
meantime, there appeared new papers and notions. Among them is the class
stochastic processes with polynomial conditional moments. We confine ourselves
to the subclass of stationary Markov processes. We do so since within this
subclass we have identifiability of the marginal distribution, consequently
the existence of the set of polynomials that are orthonormal with respect to
this distribution and what is more we have the property that all its
conditional moments are polynomials in the conditioning random variable.

In fact, the theory of polynomial Markov processes (that is the Markov
processes with polynomial conditional moments) has two independent sources.
Starting with the paper \cite{Cuch12}, which was inspired by the applications
of the polynomial processes in mathematical finance, the theory of
$m-$polynomial processes started being developed. In this theory, the finite
$m$ leading to the assumption that the property of having polynomial
conditional moments is restricted only to polynomial of a degree less or equal
than $m.$

On the other hand, inspired by the series of papers \cite{BryWe},
\cite{brwe05}, \cite{BryWe10}, \cite{BryMaWe07}, \cite{BryMaWe11},
\cite{BryWe10}, \cite{BryWe12}, of W, Bryc, J. Weso\l owski and sometimes W.
Matysiak on quadratic harnesses, the author was impressed by the existence
within this theory the very useful families of polynomial martingales. And
thus wanted to examine the family of processes where such a family of
martingales appears. The restriction imposed by the assumptions of being a
harness or a quadratic harness would be imposed later, within the theory of
polynomial processes so far developed.

As shown in the series of papers \cite{SzablPoly}, \cite{SzabStac} and
\cite{SzabMarkov18} the assumption of polynomial conditional moments and the
assumption of Markovian stationarity lead to a very specific form of the
transition probability. Namely, these assumptions allow its Lancaster-type
expansion of the transition probability. As remarked recently in
\cite{Szabl21} all bivariate distributions that satisfy some mild technical
conditions and have the property that all their conditional moments are
polynomials in the conditioning random variable can be expanded in a
Lancaster-type series. The fact that we have such expansion at our disposal,
leads to a deeper insight into such processes. More precisely, we are able to
define families of polynomial and orthogonal martingales. Moreover, having a
guaranteed existence of all moments, we can refer to the simple Kolmogorov
theorem as the primary tool in examining the path continuity of the process
under consideration. Recall that the Kolmogorov continuity theorem assures the
path continuity provided certain conditions expressed in terms of some moments
of the bivariate distribution are satisfied. Of course, we lose generality and
the simplest statement concerning path continuity practically guarantees that
the paths of the process are of all of $r-$ th H\={o}lder class of continuity,
most commonly for $r<1/4$. Hence, we lose cases with the continuous path of at
most $r-$th H\={o}lder class for $r<1/4.$ But the simplicity of consideration
and conditions is the price. In my opinion worth to be paid. How much are we
getting? As mentioned above, very simple conditions that assure continuous
path modification, and due to an assumed form of the bivariate (i.e.,
consequently transitional) probability a deep insight into the structure and
behavior of the process.

We analyze several examples of marginal distribution. In some cases we prove
that they lead to continuous path modifications with $r-$ th H\={o}lder class
of continuity with $r<1/2$ and present related transitional densities. They
are the Normal case, which is known and presented for the completeness of the
paper and the Gamma case which is believed to be new. There are however two
examples concerning special cases of beta distribution, i.e., arcsine and
semicircle distributions which lead to transitional densities of the form of
some combinations of the Jacobi Theta function and families of orthogonal
martingales of the form (\ref{MO}) with coefficients $\alpha_{n}$ $\sim
n^{2}.$

The paper is organized as follows. First, we fix notation, then we recall some
facts from the theory of stochastic processes and the theory of probability.
In particular, we recall the notion of the stationary stochastic process with
polynomials conditional moments, the main object of the research presented in
this paper. Then we formulate some necessary conditions, expressed in terms of
the first four moments of the marginal distribution that allow continuous path
modification of the process with this given marginal. Finally, we present
examples of such stationary processes that allow continuous path modification.
These include Gaussian, Gamma, Laplace, Semicircle, Arcsine and $q-$Normal
processes. The name of the process refers naturally, to the name of the
marginal distribution.

\section{Notation and the basic facts}

Let us start with the following remarks concerning notation. We will be
considering only probability measures, that is, nonnegative measures that
integrate over their supports to $1$. Moreover, the integrals with respect to
such measure $\mu,$ will be exchangeably denoted either traditionally as $\int
f(x)d\mu(x)$ or as $Ef(X).$ Here we denote by $\mu$ the so-called distribution
of the random variable $X$. More precisely $\mu$ is defined by the following
formula:
\[
P(X\leq x)\allowbreak=\allowbreak\mu((-\infty,x])\allowbreak=:\allowbreak
\int_{-\infty}^{x}d\mu(x).
\]
In the above-mentioned formulae, $f:\operatorname*{supp}X\allowbreak
\rightarrow\allowbreak\mathbb{R},$ denoted a $\mu-$measurable function and
$\operatorname*{supp}$ denoted support of the random variable $X,$ which in
this case means support of its distribution. Since we will be considering
Markov stochastic processes the majority of measures considered will be at
most $2-$dimensional. Moreover, we will be often using the so-called tower
property in integration with respect to such $2-$dimensional measure. Namely,
if $X=(Y_{1},Y_{2})$ then
\begin{gather*}
Ef(Y_{1},Y_{2})=\int_{\operatorname*{supp}(X)}f(y_{1},y_{2})d\mu(y_{1}%
,y_{2})\\
=\int_{\operatorname*{supp}(Y_{2})}\int_{\operatorname*{supp}(Y_{1})}%
f(y_{1},.)d\mu_{Y_{1}|Y_{2}}(y_{1}|y_{2})d\lambda_{Y_{2}}(y_{2})=E(Ef(Y_{1}%
,Y_{2})|Y_{2})).
\end{gather*}
Here $\lambda_{Y_{2}}(y_{2})$ denotes a marginal measure of the random
variable $Y_{2}$ and $d\mu_{Y_{1}|Y_{2}}(y_{1}|y_{2})$ denotes the so-called
conditional measure of $Y_{1}$ given $Y_{2}\allowbreak=\allowbreak y_{2}.$ The
existence of such a measure is guaranteed by the theory of measure at
$\lambda_{Y_{2}}$- almost every point of $\operatorname*{supp}Y_{2}$.
Moreover, $Ef(Y_{1},Y_{2})|Y_{2})$ denotes the so-called conditional
expectation of a random variable $f(Y_{1},Y_{2})$ given $Y_{2}$.

There were four incentives to write this paper.

The first one is the so-called continuity Kolmogorov Theorem that reads the
following (see e.g. \cite{Strook79}, p.51):

\begin{theorem}
\label{Kolmog}Let $(S,d)$ be a metric space and let $X:[0,\infty)\times
\Omega\allowbreak\rightarrow\allowbreak S$ be a stochastic process. Suppose,
that for all $T>0$ there exist $3$ positive constants $\alpha,\beta,$ $K$ such
that $\forall0\leq s,t\leq T:$%
\[
Ed^{\gamma}\left(  X_{s},X_{t}\right)  \leq K\left\vert s-t\right\vert
^{1+\beta}.
\]
Then, there exists a modification $\tilde{X}$ of $X$ that has continuous
paths, $\forall t\geq0:P(X_{t}=\tilde{X}_{t})\allowbreak=\allowbreak1.$
Moreover, every path of $\tilde{X}$ is $\delta-$H\H{o}lder, for $\delta
\in(0,\beta/\gamma)$.
\end{theorem}

Note, that by a continuous mapping of time $s:[0,\infty)\allowbreak
\rightarrow\allowbreak\mathbb{R}$, like, for example, $s(x)\allowbreak
=\allowbreak\log x,$ $x\geq0,$ that doesn't affect the continuity of the paths
of the stochastic process, we can extend the formulation of the
above-mentioned theorem to the stochastic process defined for all real $t$.

In the sequel we will use the following notation concerning Gaussian
variables. Namely, if the vector $(X,Y)^{T}$ has bivariate normal distribution
with $EX\allowbreak=\allowbreak m_{1},$ $EY\allowbreak=\allowbreak m_{2},$
$\operatorname*{var}(x)\allowbreak=\allowbreak\sigma_{1}^{2}$,
$\operatorname*{var}(Y)\allowbreak=\allowbreak\sigma_{2}^{2}$ and
$\operatorname*{cov}(X,Y)\allowbreak=\allowbreak r$ then we will write
$\left(  X,Y\right)  ^{T}\allowbreak\sim$ \allowbreak$N(m_{1},m_{2};\sigma
_{1}^{2},\sigma_{2}^{2},r).$ In case of the one-dimensional Gaussian
distribution we write $X\sim N(m;\sigma^{2})$ when $EX\allowbreak=\allowbreak
m$ and $\operatorname*{var}(X)\allowbreak=\allowbreak\sigma^{2}.$ From now on
the symbol $\sim$ will also mean "has distribution" i.e. $X\sim\mu$ means that
$X$ has distribution defined by function $\mu$.

Hence, another incentive for writing this paper is the following auxiliary,
well-known result that we prove for the sake of completeness of the paper:

\begin{lemma}
\label{mom_d}Let $(X,Y)$ have bivariate Gaussian distribution $N(0,0;1,1,\rho
)$.

Then
\[
E(X-Y)^{2k}=\frac{(2k)!}{k!}(1-\rho)^{k}.
\]

\end{lemma}

\begin{proof}
[Proof suggested by the referee.]We know that if $(X,Y)$ is bivariate Normal
(Gaussian) then all its linear transformations. In particular $X-Y$ has Normal
distribution. Since $E(X-Y)\allowbreak=\allowbreak0$ and $E(X-Y)^{2}%
\allowbreak=\allowbreak2(1-\rho)$ by our assumptions, we deduce that $X-Y\sim
N(0;2(1-\rho)).$ Now recalling that if $X\sim N(0;\sigma^{2})$ then
$EX^{n}\allowbreak=\allowbreak\left\{
\begin{array}
[c]{ccc}%
1 & if & n=0\\
0 & if & n\text{ is odd}\\
(n-1)!! & if & n\text{ is even}%
\end{array}
\right.  $, we get: $E(X-Y)^{2k}\allowbreak=\allowbreak2^{k}(1-\rho
)^{k}(2k-1)!!\allowbreak=\allowbreak\frac{(2k)!}{k!}\allowbreak(1-\rho)^{k}$.
\end{proof}

Now, keeping in mind, that the Ornstein-Uhlenbeck process is the stationary
Gaussian process with $2-$dimensional density $N(0,0;1,1,\rho)$ with
$\rho\allowbreak=\allowbreak\exp(-\alpha t),$ for some positive $\alpha,$ we
see that
\[
E(X_{\tau}-X_{\tau+t})^{2k}\allowbreak=\allowbreak\frac{(2k)!}{k!}%
(1-\exp(-\alpha t))^{k}\cong F_{k}O(\left\vert t\right\vert ^{k}),
\]
as $t\allowbreak\rightarrow\allowbreak0$. Thus, the Ornstein-Uhlenbeck process
allows modification with the continuous path. This is an obvious fact since
Ornstein-Uhlenbeck process is a continuous transformation of the Wiener
process. It is however not so obvious that the paths are of the $\gamma
-$H\H{o}lder class with $\gamma<\inf_{k}\frac{k-1}{2k}\allowbreak
=\allowbreak1/2.$

The third incentive for writing this paper were the following two results. The
first one is the so-called formula for the expansion of the $2-$dimensional
distribution $dG(x,y)$ of, say $(X_{\tau},X_{\tau+t}),$ where $X_{\tau}$ and
$X_{\tau+t}$ belong to some normalized Ornstein-Uhlenbeck process. Namely, we
have the following Lancaster-type expansion%
\begin{equation}
dG(x,y)=\frac{1}{2\pi}\exp(-\frac{x^{2}+y^{2}}{2})\sum_{j=0}^{\infty}%
H_{j}(x)H_{j}(y)\exp(-j\alpha t)/j!dxdy,\label{expG}%
\end{equation}
where $H_{j}(x)/\sqrt{j!}$ are ortonormal with respect to the measure with the
density $\frac{1}{\sqrt{2\pi}}\exp(-x^{2}/2)$. (\ref{expG}) is a simple
modification of the so-called Mehler expansion given e.g., in \cite{Sriv72},
formula (1.2). Now, we can easily deduce that this example has a nice feature
that for every $k$, conditions for having a continuous path modification are
expressed in terms of moments of the marginal distribution. One knows that
$H_{j}$ are the so-called monic (that is with $1$ as the coefficient by
$x^{j}$) Hermite polynomials of the probabilistic type.

The fourth result that spurred to write this paper is slightly more
complicated and requires a small introduction, but is basically simple and
concerns, so to say, a generalization of expansion (\ref{expG}).

As stated above, in this paper we will be examining the path continuity of the
processes defined on the whole real line. But out of all defined so stochastic
processes, we will confine our considerations to stationary stochastic
processes additionally having the property of possessing polynomial
conditional moments. The class of Markov stochastic processes having
polynomial conditional moments has been described and analyzed in the series
of papers \cite{SzablPoly}, \cite{SzabStac} and \cite{SzabMarkov18}. Recently
yet another property of such class of stochastic processes has been added.
Namely, in \cite{Szabl21} it has been shown that under some regularity
conditions, the two-dimensional distributions of a Markov stochastic process
with the property that all its conditional moments are polynomials of the
conditional random variable, must be of the Lancaster-type, that will be
explained and defined below. The example of a Lancaster-type distribution is
given by (\ref{expG}), above. The term Lancaster-type refers to the series of
papers of H.O. Lancaster \cite{Lancaster58}, \cite{Lancaster63(1)},
\cite{Lancaster63(2)}, \cite{Lancaster75}, where those types of expansions
were introduced and studied.

Since there are several points in fixing notation and exposing necessary
assumptions that will enable necessary regularity, let's present them first.

Let $\mathbf{X}=(X_{t})_{t\in\mathbb{\mathbb{R}}}$ be a real stochastic
process defined on some probability space $(\Omega,\mathcal{F},P)$. By the
stationary Markov processes we mean those Markov processes $\mathbf{X}%
=(X_{t})_{t\in\mathbb{R}}$ that have marginal distributions that do not depend
on the time parameter and the property that the conditional distributions of
say $X_{t}$ given $X_{s}$ does depend only on $t-s.$

We will assume that $\forall n\in N,$ $t\in\mathbb{\mathbb{R}}$ :
$E|X_{t}|^{n}<\infty.$ More precisely, we assume that the distributions of
$X_{t}$ will be identifiable by their moments. This assumption is a slightly
stronger assumption than the existence of all moments. For example, it is
known that if $\exists\beta>0:\int\exp(\beta|x|)d\mu(x)<\infty\text{,}$ then
the measure $\mu$ is identifiable by its moments. Here $\mu$ denotes the
distribution of $X_{0}.$ In fact, there exist other conditions assuring this.
For details see e.g. \cite{Sim98}.

In \cite{SzabStac} one considers the general case of the cardinality of
$\operatorname*{supp}\mu.$ But in order to avoid unnecessary complications, we
will confine ourselves to the infinite number of points of the set
$\operatorname*{supp}\mu$ , i.e., infinite cardinality of
$\operatorname*{supp}\mu$.

To fix further notation, let us denote $\mathcal{F}_{\leq s}=\sigma(X_{r}%
:r\in(-\infty,s]\cap\mathbb{\mathbb{R}})$, $\mathcal{F}_{\geq s}=\sigma
(X_{r}:r\in\lbrack s,\infty)\cap\mathbb{\mathbb{R}})$ and $\mathcal{F}%
_{s,u}=\sigma(X_{r}:r\notin(s,u),r\in\mathbb{\mathbb{R}})$.

Let us also denote by $\mu(.)$ and by $\eta(.|y,\tau)$ respectively marginal
stationary distribution and transition distribution of our Markov process.
That is $P(X_{t}\in A)=\int_{A}\mu(dx)$ and $P(X_{t+\tau}\in A|X_{t}%
=y)=\int_{A}\eta(dx|y,\tau)$. Stationarity of $\mathbf{X}$ means thus that
$\forall\mathbb{T}\ni\tau\neq0,B\in\mathcal{B}$ $\mathcal{(B}$ denotes here
Borel $\sigma-$field)
\[
\mu(B)=\int\eta(B|y,\tau)\mu(dy).
\]

By $L_{2}(\mu)$ let us denote the space spanned by the real functions that are
square-integrable (more precisely by the set of equivalence classes) with
respect to $\mu$ i.e.
\[
L_{2}(\mu)=\{f:\mathbb{\mathbb{R}}\longrightarrow\mathbb{\mathbb{R}}%
,\int|f|^{2}d\mu<\infty\}.
\]
Our assumption of the existence of all moments and their ability to define the
unique underlying measure (uniqueness of the related moment problem) of
$X_{0}$ in terms of $L_{2}(\mu),$ implies that there exists a set of
orthogonal polynomials that constitute the orthogonal base of this space. This
statement is based on the properties of real Hilbert spaces found in any
textbook on mathematical analysis or in particular in \cite{Sim98} or
\cite{Alexits61}. Let us denote these polynomials by $\{h_{n}\}_{n\geq-1}.$
Additionally, let us assume that polynomials $h_{n}$ are orthonormal and
$h_{-1}(x)=0,$ $h_{0}(x)=1.$ Thus we will assume that for all $i,j\geq0:$
\begin{equation}
\int h_{i}(x)h_{j}(x)d\mu(x)\allowbreak=\allowbreak\delta_{ij}, \label{or}%
\end{equation}
where, as usually, $\delta_{ij}$ denotes Kronecker's delta.

Having introduced $\mathcal{F}_{\leq s}$, $\mathcal{F}_{\geq s}$ and
$\mathcal{F}_{s,u}$ we can specify more precisely the classes of Markov
processes that we are considering in this paper. Processes with polynomial
conditional moments are those for which the following condition holds for all
$n\geq0$ and $s\in\mathbb{R}$%
\[
E(X_{t}^{n}|\mathcal{F}_{\leq s})=P_{n}(X_{s}|t,s)\text{\ \ }\;\text{a.s.,}%
\]
where $P_{n}$ denotes here polynomial of degree not exceeding $n$ in $X_{s}$
with coefficients depending on $t$ and $s$.

Note that if the above-mentioned condition holds only for $n\allowbreak
=\allowbreak1$ and $P_{1}(x|t,s)\allowbreak=\allowbreak x$ then we deal with a martingale.

Occasionally, there will appear processes that are also harnesses. So let us
define a $n-$harness by the following condition:%
\[
E(X_{t}^{n}|\mathcal{F}_{s,u})=R_{n}(X_{s},X_{u}|s,t,u)~~\text{a.s.,}%
\]

where $R_{n}$ is a polynomial of degree not exceeding in $X_{s}$ and $X_{u}$
with coefficients depending on $s,t,$ and $u$.

Hammersley in 1967 introduced the notion of harness in the paper \cite{Ham67}
by considering $1-$harnesses. W, Bryc, J. Weso\l owski and W. Matysiak
considered quadratic harnesses that are both $1$- and $2-$harnesses according
to the above-mentioned definition.

Thus, the class of Markov processes that we consider, is a class of stochastic
processes that satisfies some mild technical assumptions that were described
and interpreted in \cite{SzabStac} and moreover satisfying the following
conditions: $\forall t\in\mathbb{T},n\in N:E(X_{t}^{n})=m_{n}$ and $\forall
n\geq1,s<t:$%

\begin{equation}
E(X_{t}^{n}|\mathcal{F}_{\leq s})=Q_{n}(X_{s},t-s)\text{\ \ }\;\text{a.s.,}%
\label{p_reg}%
\end{equation}
where $Q_{n}(x,t-s)$ is a polynomial of order not exceeding $n$ in
$x\text{.}\;$

It has been shown in \cite{SzabStac} that under the above-mentioned regularity
assumptions and also under the following assumption that $\eta\ll\mu$ and
\begin{equation}
\int\left(  \frac{d\eta}{d\mu}\right)  ^{2}d\mu<\infty, \label{RNSt}%
\end{equation}
where, as above, $\mu(dx)$ and $\eta(dx|y,t)$ denote respectively marginal and
transitional measures of $X,$ the following expansion holds
\begin{equation}
\frac{d\eta}{d\mu}\left(  x|y,t\right)  =\sum_{n\geq0}\exp(-\alpha_{n}%
t)h_{n}(x)h_{n}(y). \label{gest}%
\end{equation}
In this formula, we will set $\alpha_{0}\allowbreak=\allowbreak0$ and there
appear certain positive constants $\left\{  \alpha_{i}\right\}  _{i\geq1}$
whose existence is guaranteed by the mentioned above technical assumptions.
The constants $\left\{  \alpha_{i}\right\}  _{i\geq1}$ allow to define
orthogonal martingales defined by the formula:
\begin{equation}
M_{n}(X_{t},t)=\exp(\alpha_{n}t)h_{n}(X_{t}),\;\text{{}}\;\;n\geq1. \label{MO}%
\end{equation}

The class of such stationary stochastic process will be briefly called SMPR.
More precisely, since from (\ref{gest}) it follows that such processes are
completely characterized by the distribution $\mu$ and a sequence $\left\{
\alpha_{n}\right\}  $ of positive numbers. We will write to denote such a
process $\mathbf{X\allowbreak=\allowbreak}\left\{  X_{t}\right\}
_{t\in\mathbb{R}}\allowbreak=\allowbreak SMPR\left(  \left\{  \alpha
_{n}\right\}  ,\mu\right)  $.

Thus, under the above-mentioned assumption, the two-dimensional distribution
of say $\left(  X_{\tau},X_{\tau+t}\right)  $ is given by the formula
\begin{equation}
\rho(dx,dy)=d\mu(x)d\mu(y)\sum_{n\geq0}\exp(-\alpha_{n}t)h_{n}(x)h_{n}(y).
\label{Lanc}%
\end{equation}

\begin{remark}
\label{cond}Notice, that from the fact that $M_{n}(X_{t,}t)$ is a martingale,
it follows that
\[
E(M_{n}(X_{t+\tau},t+\tau)|\mathcal{F}_{\leq\tau})\allowbreak=\allowbreak
M_{n}(X_{\tau},\tau),
\]
hence, following (\ref{MO}), we see that
\[
E\left(  h_{n}(X_{t+\tau})|\mathcal{F}_{\leq\tau}\right)  \allowbreak
=\allowbreak\exp(-\alpha_{n}t)h_{n}(X_{\tau}),
\]
a.s. mod $d\mu$.
\end{remark}

Now, let us recall that in \cite{SzabChol} the following numbers $\left\{
c_{j,k}\right\}  _{j\geq0,0\leq n\leq j}$ were introduced and analyzed. Their
interpretation is the following:
\begin{equation}
x^{j}\allowbreak=\allowbreak\sum_{n=0}^{j}c_{j,n}h_{n}(x). \label{xnah}%
\end{equation}
for all $j\geq0$. Let us set $c_{j,n}\allowbreak=\allowbreak0$ for $n>j.$ Let
us also denote by $L_{k}$ the following lower-triangular matrix $[c_{j,n}%
]_{j=0,\ldots,k,n=0,\ldots k}.$ It has been remarked in \cite{SzabChol}
(Propositions 1 and 2) that
\[
L_{k}L_{k}^{T}=\mathbf{M}_{k},
\]
where $\mathbf{M}_{k}$ is the moment matrix, i.e., $\mathbf{M}_{k}%
=[m_{i+j}]_{i=0,\ldots,k,j=0,\ldots,k},$ where $m_{j}\allowbreak
=\allowbreak\int x^{j}d\mu(x)$ that is equal to $j-$th moment of the
distribution $\mu$. Hence, the coefficients $c_{j,n}$ can be computed directly
from the moments' matrix. Besides, we know by (\cite{SzabChol}, Proposition
1(iii)):%
\[
\sum_{n=0}^{\min(j,k)}c_{j,n}c_{k,n}=m_{j+k}.
\]
for all $j,k\geq0.$ We have the following observation:

\begin{remark}%
\begin{align}
E(X_{t+\tau}^{j}|\mathcal{F}_{\leq\tau})  &  =E(\sum_{n=0}^{j}c_{j,n}%
h_{n}(X_{t+\tau})|\mathcal{F}_{\leq\tau})=\sum_{n=0}^{j}c_{j,n}\exp
(-\alpha_{n}t)h_{n}(X_{\tau})\label{condmom}\\
&  =X_{\tau}^{j}-\sum_{n=1}^{j}c_{j,n}(1-\exp(-\alpha_{n}t))h_{n}(X_{\tau
}),\nonumber
\end{align}
mod $d\mu,$ since we have $\alpha_{0}\allowbreak=\allowbreak0.$
\end{remark}

In the sequel, we will use the following almost trivial lemma. It has been
presented in assertion 6 of Proposition 1 in \cite{SzabDif22}. Since it is
important in the present context, we will present its generalized version once
more with its simple proof.

\begin{lemma}
\label{momenty}Suppose that a sequence $\left\{  b_{n}\right\}  _{n\geq0}$ is
a positive moment sequence sequence.

i) Suppose that $b_{0}\allowbreak=\allowbreak1$, and $b_{4m+2}\allowbreak
=\allowbreak b_{2m+1}^{2}$ for some $m\geq0$. Then $\forall n\geq0:$
$b_{n}\allowbreak=\allowbreak b_{2m+1}^{n/(2m+1)}$.

ii) Suppose that $b_{0}\allowbreak=\allowbreak1$ and $b_{4m}\allowbreak
=\allowbreak b_{2m}^{2}$ for some $m\geq1.$ Let us set $p\allowbreak
=\allowbreak(b_{1}\allowbreak+\allowbreak b_{2m}^{1/(2m)})/(2b_{2m}^{1/(2m)})$
Then for all $n\geq0:b_{2n}\allowbreak=\allowbreak b_{2m}^{n/m}$ and
$b_{2n+1}\allowbreak=\allowbreak b_{2m}^{(2n+1)/(2m)}p-b_{2m}^{(2n+1)/(2m)}%
(1-p).$
\end{lemma}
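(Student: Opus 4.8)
The plan is to interpret $\{b_n\}_{n\ge0}$ through a representing measure and to read both hypotheses as degenerate (zero-variance) cases of Jensen's inequality. Since $\{b_n\}$ is a positive moment sequence with $b_0=1$, there is a probability measure $\nu$ on $\mathbb{R}$ with $b_n=\int x^n\,d\nu(x)$ for all $n$. For any $k$ one has the elementary bound $b_{2k}=\int (x^k)^2\,d\nu\ge\left(\int x^k\,d\nu\right)^2=b_k^2$, with equality precisely when $x\mapsto x^k$ is $\nu$-almost surely constant; equivalently, when $\operatorname{Var}_\nu(x^k)=0$. Each of the two hypotheses is exactly such an equality, and the whole proof reduces to identifying the resulting support of $\nu$ and then reading off its moments.

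For part (i), the hypothesis $b_{4m+2}=b_{2m+1}^2$ is the equality case with $k=2m+1$, so $x^{2m+1}$ is $\nu$-a.s. equal to a constant. Because $t\mapsto t^{2m+1}$ is a strictly increasing bijection of $\mathbb{R}$, this forces $x$ itself to be $\nu$-a.s. equal to a single value $a$, i.e. $\nu$ is the Dirac mass at $a$. Then $b_n=a^n$ for every $n$; in particular $a=b_{2m+1}^{1/(2m+1)}$, whence $b_n=b_{2m+1}^{n/(2m+1)}$, as claimed.

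For part (ii), the hypothesis $b_{4m}=b_{2m}^2$ is the equality case with $k=2m$, so $x^{2m}$ is $\nu$-a.s. equal to the constant $b_{2m}$; setting $a=b_{2m}^{1/(2m)}\ge0$, the measure $\nu$ is supported on the two points $\{-a,+a\}$. Writing $p=\nu(\{a\})$ (so that $\nu(\{-a\})=1-p$), a direct computation gives $b_{2n}=p\,a^{2n}+(1-p)a^{2n}=a^{2n}=b_{2m}^{n/m}$ and $b_{2n+1}=p\,a^{2n+1}-(1-p)a^{2n+1}=(2p-1)\,b_{2m}^{(2n+1)/(2m)}$, which is the stated expression $b_{2m}^{(2n+1)/(2m)}p-b_{2m}^{(2n+1)/(2m)}(1-p)$. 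It remains only to pin down $p$: evaluating the odd formula at $n=0$ gives $b_1=(2p-1)a$, so $p=(b_1+a)/(2a)=(b_1+b_{2m}^{1/(2m)})/(2b_{2m}^{1/(2m)})$, matching the definition in the statement.

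The steps are all short, so I expect no serious computational obstacle; the one point that needs care is the degenerate situation. Since $\operatorname{supp}\mu$ is assumed infinite elsewhere in the paper, I expect $b_{2m}>0$ here, so that $a>0$ and $p$ is well defined; if $b_{2m}=0$ the sequence is trivial ($\nu$ concentrated at $0$) and both assertions hold vacuously. The genuinely conceptual step—recognizing each hypothesis as a Cauchy--Schwarz (equivalently, Jensen) equality and hence as a collapse of the support to one or two atoms—is what makes the argument work, and everything after it is bookkeeping.
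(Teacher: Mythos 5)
Your proposal is correct and follows essentially the same route as the paper's own proof: both represent $\{b_n\}$ by a random variable (or measure), read the hypothesis as $\operatorname{Var}(X^{2m+1})=0$ or $\operatorname{Var}(X^{2m})=0$, and conclude that the support collapses to one point (resp.\ two symmetric points $\pm b_{2m}^{1/(2m)}$), after which the moment formulas are bookkeeping. If anything, your write-up is slightly more careful than the paper's, since you make explicit the injectivity of the odd power map in part (i) and handle the degenerate case $b_{2m}=0$ in part (ii).
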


\begin{proof}
Let $X$ denote a random variable whose moments are $b_{n},$ that is
$EX^{n}\allowbreak=\allowbreak b_{n}.$ In the case i) we have $E(X^{4m+2}%
)\allowbreak=\allowbreak(E(X^{2m+1}))^{2}.$ That is $E(X^{2m+1}-EX^{2m+1}%
)^{2}\allowbreak=\allowbreak\operatorname*{var}(X^{2m+1})\allowbreak
=\allowbreak0.$ But this equality means that the distribution of $X$ is a
one-point distribution, i.e. $P(X=b_{2m+1}^{1/(2m=1)})\allowbreak
=\allowbreak1.$ In the case ii) we deduce that $E(X^{2m}-EX^{2m}%
)^{2}\allowbreak=\allowbreak0.$ That is that the distribution of $X^{2}$ is a
one point distribution. One can find that $X^{2}\allowbreak=\allowbreak
b_{2m}^{1/(2m)}$ and that $P(X=b_{2m}^{1/(2m)})\allowbreak=\allowbreak
p\allowbreak=\allowbreak1-P(X=-b_{2m}^{1/(2m)})$, for some $p\in\lbrack0,1].$
But it means that $b_{1}\allowbreak=\allowbreak EX\allowbreak=\allowbreak
b_{2m}^{1/(2m)}p\allowbreak-\allowbreak b_{2m}^{1/(2m)}(1-p).$ Parameter $p$
can be be found to be equal to $(b_{1}\allowbreak+\allowbreak b_{2m}%
^{1/(2m)})/(2b_{2m}^{1/(2m)})$. Then and $b_{2n+1}$ is a $2n+1-$th moment of
such variable that is $b_{2m}^{(2n+1)/(2m)}p\allowbreak-\allowbreak
b_{2m}^{(2n+1)/(2m)}(1-p)$.
\end{proof}

\begin{remark}
1) It has been remarked in [\cite{Szabl21}, Corollary 1) that the sequence
$\left\{  \exp(\alpha_{n}t)\right\}  _{n\geq0}$ must be such that 1)
$\sum_{n\geq0}\exp(-2\alpha_{n}t)<\infty$ for all $t\in\mathbb{R}$. 2) If
$\operatorname*{supp}\mu$ is unbounded, then $\left\{  \exp(-\alpha
_{n}t)\right\}  _{n\geq0}$ must be a moment sequence. Moreover, since all
$\left\{  \alpha_{n}\right\}  _{n\geq1}$ are positive then we deduce that the
support of the measure with respect to which $\left\{  \exp(-\alpha
_{n}t)\right\}  _{n\geq0}$ is a moment sequence must have support contained in
$[0,1]$. Consequently, not only the matrix $\left\{  \exp(-\alpha
_{i+j}t)\right\}  _{0\leq i,j,\leq n}$ but also the matrix $\left\{
\exp(-\alpha_{i+j+1}t)\right\}  _{0\leq i,j,\leq n}$ must be nonnegative
definite. In particular, we deduce that
\[
~2\alpha_{2n+1}\geq\alpha_{2n}+\alpha_{2n+2}\text{ and }2\alpha_{2n}\geq
\alpha_{2n-1}+\alpha_{2n+1}.
\]
But this means that for all $m\geq0$ we have
\[
2\alpha_{m}\geq\alpha_{m-1}+\alpha_{m+1},
\]
which means that the sequence $\left\{  \alpha_{n}\right\}  _{n\geq0}$ must be
a concave sequence. We must also have for all $i,n\geq0,$ $i+n\geq1$
\[
\alpha_{2n}+\alpha_{2i}\leq2\alpha_{n+i}\text{ and }\alpha_{2i+1}%
+\alpha_{2i+2n+1}\leq2\alpha_{2i+2n}.
\]
The last inequality follows the fact that for $\left\{  \exp(-\alpha
_{n}t)\right\}  _{n\geq0}$ being a moment sequence is equivalent to the fact
that for all $m$ the matrices $\left[  \exp(-\alpha_{i+j}t)\right]  _{0\leq
i,j\leq m}$ $\left[  \exp(-\alpha_{i+j+1}t)\right]  _{0\leq i,j\leq m}$must be
nonnegative defined. Thus, in particular, all central $2\times2$ minors must
be nonnegative. Now some such minors are the following ones: $\left[
\begin{tabular}
[c]{ll}%
$\exp(-t\alpha_{2i}$ & $\exp(-\alpha_{2i+m}t)$\\
$\exp(-\alpha_{2i+m}t)$ & $\exp(-\alpha_{2m+2i}t)$%
\end{tabular}
\ \ \ \right]  $ and $\left[
\begin{tabular}
[c]{ll}%
$\exp(-t\alpha_{2i+1}$ & $\exp(-\alpha_{2i+2n}t)$\\
$\exp(-\alpha_{2i+2n}t)$ & $\exp(-\alpha_{2n+2i+1}t)$%
\end{tabular}
\ \ \ \right]  $. In the first of these matrices we set $n\allowbreak
=\allowbreak i+m.$
\end{remark}

\begin{remark}
\label{a2a}Notice also that if the support of the measure $\mu$ is unbounded
and we are able to show that $\alpha_{2}\allowbreak=\allowbreak2\alpha
_{1\text{.}}$ then, as it follows from assertion $6$ of Proposition $1.$ we
must have $\alpha_{n}\allowbreak=\allowbreak n\alpha_{1}$ for all $n\geq1$.
Hence, consequently, $SMPR\left(  \left\{  \alpha_{n}\right\}  ,\mu\right)  $
is additionally a harness (for details see \cite{SzabStac}).
\end{remark}

\section{Necessary conditions}

Having said this, we can state that the paper is dedicated to defining
conditions under which a given $SMPR\left(  \left\{  \alpha_{n}\right\}
,\mu\right)  $ allows continuous path modification.

Thus we can calculate the quantities $E\left\vert X_{\tau}-X_{\tau
+t}\right\vert ^{\alpha}$ that are needed in order to apply Kolmogorov's
continuity theorem and examine the dependence of these quantities on $t.$

Of course, in order to simplify calculations, we consider only $\alpha
\allowbreak=\allowbreak2k$ for some natural $k.$

It turns out that for further analysis, we will need the following moments
defined by the formula%
\begin{equation}
\int x^{j}h_{n}(x)d\mu(x)\allowbreak=\allowbreak EX^{j}h_{n}(X).\label{Hjn}%
\end{equation}
Notice that we have
\[
EX^{j}h_{n}(X)=c_{j,n},
\]
where $c_{j,n}$ are defined above, by (\ref{xnah}). Let us observe that from
the definition of the orthogonal polynomials it follows that $\forall0\leq
j<n$ we have $EX^{j}h_{n}(X)=0.$ Thus, in the sequel we will be using
parameters $c_{j,n}$ set as $0$ for $j<n.$

\begin{remark}
\label{og}Notice that regardless of the boundedness of the support of marginal
measure $\mu$ we observe that the following sequence
\begin{equation}
\left\{  m_{2j}-\sum_{n=1}^{j}\left(  1-\exp(-\alpha_{n}t)\right)  c_{j,n}%
^{2}\right\}  _{j\geq0} \label{bimom}%
\end{equation}
must be a moment sequence for every $t>0$. This simple observation follows the
fact that
\[
E(X_{\tau}X_{\tau+t})^{j}=\int\int(xy)^{j}\rho(dx,dy),
\]
where $\rho(dx,dy)$ is given by (\ref{Lanc}). Then we use the definition of
coefficients $c_{j,n}$ and the fact that $\sum_{n=0}^{j}c_{j,n}^{2}%
\allowbreak=\allowbreak m_{2j}$.
\end{remark}

Now we can formulate sufficient conditions for the process
$\mathbf{X\allowbreak=\allowbreak}SMPR\left(  \left\{  \alpha_{n}\right\}
,\mu\right)  $ to allow continuous path modification. Namely, we have the
following lemma.

\begin{lemma}
\label{ciag}Let $\mathbf{X\allowbreak=\allowbreak}\left\{  X_{t}\right\}
_{t\in\mathbb{R}}\allowbreak=\allowbreak SMPR\left(  \left\{  \alpha
_{n}\right\}  ,\mu\right)  $ with polynomial $\left\{  h_{n}(x)\right\}  $
orthonormal with respect to $\mu.$ Let us define numbers $\left\{
c_{j,n}\right\}  _{j\geq0,0\leq n\leq j}$ by (\ref{Hjn}). If for some $k\geq1$
and $r>1$ the following conditions hold:%
\begin{equation}
\sum_{n=1}^{2k}\alpha_{n}^{s}\sum_{j=n}^{2k}(-1)^{j}\binom{2k}{j}%
c_{j,n}c_{2k-j,n}=0, \label{rown}%
\end{equation}
for $s\allowbreak=\allowbreak1,\ldots,r,$ then the process $\mathbf{X}$ allows
continuous path modification and the so-modified process has paths of the
$m-$th Holder class where $m<\frac{r}{2k}.$
\end{lemma}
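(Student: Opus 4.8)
The plan is to apply the Kolmogorov continuity theorem (Theorem~\ref{Kolmog}) with $\gamma = 2k$, so I must estimate $E(X_\tau - X_{\tau+t})^{2k}$ and show it is $O(|t|^{r})$ as $t\to 0$. First I would expand the $2k$-th power binomially and integrate against the bivariate law $\rho(dx,dy)$ from (\ref{Lanc}):
\begin{equation*}
E(X_\tau - X_{\tau+t})^{2k} = \sum_{j=0}^{2k}(-1)^{j}\binom{2k}{j}E\!\left(X_\tau^{2k-j}X_{\tau+t}^{j}\right).
\end{equation*}
Using (\ref{xnah}) to write each power $x^{j}=\sum_{n}c_{j,n}h_n(x)$ and the orthogonality (\ref{or}) together with the expansion (\ref{Lanc}), the mixed moment collapses to $E(X_\tau^{2k-j}X_{\tau+t}^{j})=\sum_{n=0}^{\min(j,2k-j)}c_{j,n}c_{2k-j,n}\exp(-\alpha_n t)$. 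Substituting this back and interchanging the order of summation gives
\begin{equation*}
E(X_\tau - X_{\tau+t})^{2k} = \sum_{n=0}^{2k}\exp(-\alpha_n t)\sum_{j=n}^{2k}(-1)^{j}\binom{2k}{j}c_{j,n}c_{2k-j,n}.
\end{equation*}

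Next I would analyze the small-$t$ behaviour of this finite sum. The $n=0$ term carries $\exp(-\alpha_0 t)=1$ and equals $\sum_{j}(-1)^j\binom{2k}{j}c_{j,0}c_{2k-j,0}$; since $c_{j,0}=m_j$ this is the $2k$-th binomial moment difference, which one checks vanishes (it is $E(X-X)^{2k}$ in the degenerate $t=0$ limit, or directly because the alternating binomial sum of $m_j m_{2k-j}$ against the constant polynomial is the expectation of $(X-Y)^{2k}$ when $X=Y$). So the leading constant term is $0$. For $n\geq 1$ I would Taylor-expand $\exp(-\alpha_n t)=\sum_{s\geq 0}(-\alpha_n t)^{s}/s!$ and collect powers of $t$. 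The coefficient of $t^{s}$ (for $s\geq 1$) is, up to the factor $(-1)^s/s!$, exactly
\begin{equation*}
\sum_{n=1}^{2k}\alpha_n^{s}\sum_{j=n}^{2k}(-1)^{j}\binom{2k}{j}c_{j,n}c_{2k-j,n},
\end{equation*}
which is precisely the quantity that hypothesis (\ref{rown}) sets to zero for $s=1,\dots,r$.

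Therefore, under the hypotheses, the Taylor expansion of $E(X_\tau-X_{\tau+t})^{2k}$ has no terms of order $t^{0},t^{1},\dots,t^{r}$, so the first possibly nonzero term is of order $t^{r+1}$ (or higher if further coefficients vanish), giving the bound $E(X_\tau-X_{\tau+t})^{2k}\leq K|t|^{r+1}$ for $|t|$ small, with a finite constant $K$ since the sum is finite. Applying Theorem~\ref{Kolmog} with $\gamma=2k$ and $1+\beta = r+1$, i.e.\ $\beta=r$, yields a continuous path modification whose paths are $m$-H\"older for every $m<\beta/\gamma = r/(2k)$, as claimed. I expect the main obstacle to be handling the small-$t$ regime rigorously: the bound from Kolmogorov is local in $t$, so I must confirm that for each fixed $T$ there is a uniform constant $K$ controlling the remainder of the Taylor expansion on a bounded $t$-range (the analyticity of $\exp(-\alpha_n t)$ in the finite sum makes this routine but must be stated), and I should also verify carefully the vanishing of the $n=0$ term and the legitimacy of interchanging the (finite) summations, which is where the bookkeeping with the convention $c_{j,n}=0$ for $n>j$ must be applied consistently.
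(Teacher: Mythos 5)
Your overall route coincides with the paper's: binomially expand $E(X_\tau-X_{\tau+t})^{2k}$, reduce the mixed moments through the expansion (\ref{Lanc}) (equivalently, through (\ref{condmom})) to $\sum_{n}c_{j,n}c_{2k-j,n}\exp(-\alpha_n t)$, interchange the finite sums, Taylor-expand in $t$, and invoke Theorem \ref{Kolmog} with $\gamma=2k$, $\beta=r$. However, one step as you state it is simply false: the $n=0$ term
\[
A_0=\sum_{j=0}^{2k}(-1)^j\binom{2k}{j}c_{j,0}\,c_{2k-j,0}=\sum_{j=0}^{2k}(-1)^j\binom{2k}{j}m_j\,m_{2k-j}
\]
does \emph{not} vanish. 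Since the moments factor, it equals $E(X-Y)^{2k}$ for \emph{independent} $X,Y\sim\mu$ (not for $X=Y$), which is strictly positive whenever $\mu$ is non-degenerate; e.g.\ for $2k=2$ it is $2m_2-2m_1^2=2\operatorname{var}(X)$. What is true is that the \emph{total} constant term $\sum_{n=0}^{2k}A_n$ (where $A_n=\sum_{j=n}^{2k}(-1)^j\binom{2k}{j}c_{j,n}c_{2k-j,n}$) vanishes, as one sees by setting $t=0$, since then the left-hand side is $E(X_\tau-X_\tau)^{2k}=0$; equivalently $A_0=-\sum_{n\geq1}A_n$. Your bookkeeping is therefore internally inconsistent: had $A_0=0$ held, the $s=0$ contributions from the Taylor expansions of $\exp(-\alpha_n t)$, $n\geq1$, would leave a residual constant $\sum_{n\geq1}A_n=-A_0\neq0$, which hypothesis (\ref{rown}) (which only concerns $s\geq1$) cannot remove. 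The conclusion ``no $t^0$ term'' is correct, but neither of the two pieces you implicitly need vanishes separately.

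The repair is immediate, and it is exactly how the paper's own proof sidesteps the issue: use (\ref{condmom}) in the form $E(X_{t+\tau}^j|\mathcal{F}_{\leq\tau})=X_\tau^j-\sum_{n=1}^{j}(1-\exp(-\alpha_n t))c_{j,n}h_n(X_\tau)$, so that after the binomial sum the pure powers cancel through $\sum_{j}(-1)^j\binom{2k}{j}=0$ and one gets
\[
E(X_\tau-X_{\tau+t})^{2k}=-\sum_{n=1}^{2k}\bigl(1-\exp(-\alpha_n t)\bigr)A_n,
\]
in which the factors $1-\exp(-\alpha_n t)$ vanish at $t=0$, making the absence of a constant term manifest; expanding them then yields precisely your coefficients of $t^s$ for $s\geq1$, which (\ref{rown}) annihilates for $s=1,\dots,r$, leaving $O(t^{r+1})$ and the claimed H\"older exponents $m<r/(2k)$. (Alternatively, keep your grouping by $\exp(-\alpha_n t)$ and justify the vanishing of the constant term by evaluating at $t=0$, rather than by the incorrect claim about $A_0$.) The remainder of your argument --- the mixed-moment formula, the interchange of finite sums under the convention $c_{j,n}=0$ for $n>j$, uniformity of the Taylor remainder on bounded $t$-intervals, and the application of Theorem \ref{Kolmog} --- is sound and agrees with the paper.
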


\begin{proof}
Let us start with the following calculation:%
\begin{gather}
E(X_{\tau+t}-X_{\tau})^{m}=\sum_{j=0}^{m}(-1)^{j}\binom{m}{j}EX_{\tau}%
^{m-j}E(X_{t+\tau}^{j}|\mathcal{F}_{\leq\tau})=\label{expan}\\
\sum_{j=0}^{m}(-1)^{j}\binom{m}{j}EX_{\tau}^{m-j}(X_{\tau}^{m}-\sum_{n=1}%
^{j}(1-\exp(-\alpha_{n}t))c_{j,n}h_{n}(X_{\tau})=\nonumber\\
-\sum_{j=0}^{m}(-1)^{j}\binom{m}{j}\sum_{n=1}^{j}(1-\exp(-\alpha
_{n}t))c_{m-j,n}c_{j,n}\nonumber\\
=-\sum_{n=1}^{m}(1-\exp(-\alpha_{n}t))\sum_{j=n}^{m}(-1)^{j}\binom{m}%
{j}c_{j,n}c_{m-j,n}\nonumber\\
=\sum_{k=1}^{r}(-1)^{k-1}t^{k}/k!\sum_{n=1}^{m}\alpha_{n}^{k}\sum_{j=n}%
^{m}(-1)^{j}\binom{m}{j}c_{j,n}c_{m-j,n}+O(t^{r+1})\text{.}\nonumber
\end{gather}

Now we take $m\allowbreak=\allowbreak2k$ and see that $E\left\vert X_{t+\tau
}-X_{\tau}\right\vert ^{2k}\allowbreak\cong\allowbreak O(\left\vert
t\right\vert ^{r+1}).$ Now we can apply Kolmogorov's Theorem \ref{Kolmog}.
\end{proof}

Notice that, the crucial from the point of view of this paper, numbers
$c_{j,n}$ can be expressed in terms of moments of the marginal distribution
$\mu.$ Since the first two particular cases of $k$ are the most important let
us find numbers $c_{j,n}$ for $n\allowbreak=\allowbreak0,1,2$ and
$j\allowbreak=\allowbreak0,1,2,3,4.$The conditions presented in (\ref{rown})
are somewhat difficult to satisfy in the general case and have rather a
theoretical character. The most important case is the case $k\allowbreak
=\allowbreak2.$ Notice that then we have only two constants $\alpha_{1}$ and
$\alpha_{2}$ involved. They are thus the most important from the point of view
of the continuity of the paths of the analyzed process. Hence, let us analyze
this case in more detail.

\begin{proposition}
Let $X\sim\mu.$ Let us denote $EX\allowbreak=\allowbreak\nu$, $m_{j}%
\allowbreak=\allowbreak E(X-\nu)^{j},$ and $h_{j}(x)$ the polynomial of degree
$j$ that is orthonormal with respect to the measure $\mu$. Let the numbers
$c_{j,n}$ be defined by (\ref{Hjn}). Then:

i) $c_{j,0}\allowbreak=\allowbreak EX^{j}\allowbreak=\allowbreak\sum_{k=0}%
^{j}\binom{j}{k}\nu^{k}m_{j-k},$ with an obvious fact that $m_{1}%
\allowbreak=\allowbreak0.$

ii) $c_{1,1}\allowbreak=\allowbreak\sqrt{m_{2}},$ $c_{2,1}\allowbreak
=\allowbreak(m_{3}+2\nu m_{2})/\sqrt{m_{2}}$, $c_{3,1}\allowbreak
=\allowbreak(m_{4}+3\nu m_{3}+3\nu^{2}m_{2})/\sqrt{m_{2}},$ consequently we
have $E(X_{\tau}-X_{\tau+t})^{2}\allowbreak=\allowbreak2m_{2}(1-\exp
(-\alpha_{1}t)).$

iii) $c_{2,2}\allowbreak=\allowbreak\sqrt{m_{4}m_{2}-m_{3}^{2}-m_{2}^{3}%
}/\sqrt{m_{2}},$ consequently we have
\begin{gather*}
E(X_{\tau}-X_{\tau+t})^{4}\allowbreak=\allowbreak2(m_{4}+3m_{2}^{2}%
)\allowbreak-\allowbreak2\exp(-\alpha_{1}t)(4m_{2}m_{4}-3m_{3}^{2}%
)/m_{2}\allowbreak\\
+\allowbreak6\exp(-\alpha_{2}t)\left(  m_{2}m_{4}-m_{3}^{2}-m_{2}^{3}\right)
/m_{2}.\allowbreak
\end{gather*}

\end{proposition}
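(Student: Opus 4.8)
The plan is to compute the coefficients $c_{j,n}$ directly from their definition (\ref{Hjn}), namely $c_{j,n}=EX^{j}h_{n}(X)$, by first writing down the orthonormal polynomials $h_{0},h_{1},h_{2}$ explicitly in terms of the central moments $m_{j}$, and then to substitute the resulting $c_{j,n}$ into the increment expansion derived inside the proof of Lemma \ref{ciag}. For part i) I would simply note that $h_{0}\equiv 1$, so $c_{j,0}=EX^{j}$, and expand $X^{j}=((X-\nu)+\nu)^{j}$ by the binomial theorem; taking expectations and using $m_{1}=0$ gives $c_{j,0}=\sum_{k=0}^{j}\binom{j}{k}\nu^{k}m_{j-k}$.

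For parts ii) and iii) I would build $h_{1}$ and $h_{2}$ by Gram--Schmidt in the variable $y=x-\nu$. The degree-one orthonormal polynomial is $h_{1}(x)=(x-\nu)/\sqrt{m_{2}}$, and the monic degree-two orthogonal polynomial is $p_{2}(y)=y^{2}-(m_{3}/m_{2})y-m_{2}$, the coefficients being fixed by orthogonality to $1$ and to $y$ (using $EY=0$, $EY^{2}=m_{2}$, $EY^{3}=m_{3}$). Its squared norm is most easily obtained from $Ep_{2}(Y)^{2}=EY^{2}p_{2}(Y)=m_{4}-m_{3}^{2}/m_{2}-m_{2}^{2}$, whence $h_{2}=p_{2}/\lVert p_{2}\rVert$. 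Then each $c_{j,n}=EX^{j}h_{n}(X)$ is computed by writing $X^{j}=((X-\nu)+\nu)^{j}$, expanding, and integrating against $h_{n}$; orthogonality of $h_{n}$ to lower powers of $Y$ kills most terms. In particular $c_{1,1}=EY\,h_{1}=m_{2}/\sqrt{m_{2}}=\sqrt{m_{2}}$, and $c_{2,2}=EX^{2}h_{2}(X)=Ep_{2}(Y)h_{2}=\lVert p_{2}\rVert$ (the lower-order pieces of $X^{2}$ drop out by orthogonality), giving the stated $\sqrt{m_{4}m_{2}-m_{3}^{2}-m_{2}^{3}}/\sqrt{m_{2}}$; the expressions for $c_{2,1}$ and $c_{3,1}$ follow the same routine.

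Finally, for the two increment formulas I would substitute these $c_{j,n}$ into the identity
\[
E(X_{\tau+t}-X_{\tau})^{m}=-\sum_{n=1}^{m}(1-\exp(-\alpha_{n}t))\sum_{j=n}^{m}(-1)^{j}\binom{m}{j}c_{j,n}c_{m-j,n}
\]
read off from (\ref{expan}). The crucial simplification is that, by orthogonality, $c_{0,n}=Eh_{n}(X)=0$ for $n\ge 1$ and $c_{1,2}=c_{1,3}=0$, so for $m=2$ only the $n=1$, $j=1$ term survives, yielding $E(X_{\tau}-X_{\tau+t})^{2}=2m_{2}(1-\exp(-\alpha_{1}t))$; and for $m=4$ the entire $n=3$ and $n=4$ blocks vanish, leaving only the $\alpha_{1}$ and $\alpha_{2}$ contributions. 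I expect the main obstacle to be the bookkeeping of the $m=4$ double sum: one must collect $-8c_{1,1}c_{3,1}+6c_{2,1}^{2}$ for $n=1$ and $6c_{2,2}^{2}$ for $n=2$, check that all dependence on the location parameter $\nu$ cancels (as it must, since the increment is shift-invariant), and reorganise the constant part into $2(m_{4}+3m_{2}^{2})$ to match the claimed form.
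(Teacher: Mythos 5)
Your proposal is correct and follows essentially the same route as the paper's proof: obtain $h_{1}$ and $h_{2}$ explicitly, read off the coefficients $c_{j,n}$, and substitute them into the expansion (\ref{expan}) of Lemma \ref{ciag}, where for $m=4$ indeed only the $n=1$ block $-8c_{1,1}c_{3,1}+6c_{2,1}^{2}$ and the $n=2$ block $6c_{2,2}^{2}$ survive, with the $\nu$-dependence cancelling as you predict. The only deviation is cosmetic: you build $h_{2}$ by Gram--Schmidt in the centered variable $y=x-\nu$ with the norm shortcut $Ep_{2}(Y)^{2}=EY^{2}p_{2}(Y)$, whereas the paper invokes the determinantal formula (\ref{port}) and a direct ``lengthy calculation'' of $Eh_{2}(X)^{2}$; both yield $c_{2,2}=\sqrt{m_{2}m_{4}-m_{3}^{2}-m_{2}^{3}}/\sqrt{m_{2}}$ (your computation incidentally confirms that the $m_{3}^{3}$ occurring in the paper's proof of part iii) is a typo for $m_{3}^{2}$).
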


\begin{proof}
i) Since $h_{0}(x)\allowbreak=\allowbreak1,$ we must have $c_{j,0}%
\allowbreak=\allowbreak EX^{j}h_{0}(X)\allowbreak=\allowbreak E(X-\nu+\nu
)^{j}\allowbreak=\allowbreak\sum_{k=0}^{j}\binom{j}{k}\nu^{k}m_{j-k}$ with
$c_{1,0}\allowbreak=\allowbreak\nu$. Moreover, we have following
(\ref{expan})
\[
E(X_{\tau}-X_{\tau+t})^{2}=2c_{2,0}-2(c_{1,0}^{2}+\exp(-\alpha_{1}%
t)m_{2})=2m_{2}-2m_{2}\exp(-\alpha_{1}t).
\]
ii) We start with the well-known fact that polynomials that are orthogonal
with respect to the measure $\mu$ are given by the formula%
\begin{equation}
\eta_{n}(x)\allowbreak=\allowbreak a_{n}\det%
\begin{bmatrix}
1 & EX & \ldots & EX^{n}\\
EX & EX^{2} & \ldots & EX^{n+1}\\
\ldots & \ldots & \ldots & \ldots\\
1 & x & \ldots & x^{n}%
\end{bmatrix}
, \label{port}%
\end{equation}
for some constants $a_{n}$. Hence, orthonormal polynomials $h_{n}(x)$ are
given as $b_{n}\eta_{n}(x)$ for suitably chosen constants $b_{n}$. Thus we
have $h_{1}(x)\allowbreak=\allowbreak b_{n}\det%
\begin{bmatrix}
1 & \nu\\
1 & x
\end{bmatrix}
\allowbreak=\allowbreak b_{1}(x-\nu).$ Further, we have to have $Eh_{1}%
^{2}(X)\allowbreak=\allowbreak1,$ so $b_{1}\allowbreak=\allowbreak
1/\sqrt{m_{2}}.$ Hence, we have $c_{j,1}\allowbreak=\allowbreak(EX^{j}%
(X-\nu))/\sqrt{V}\allowbreak=\allowbreak(EX^{j+1}-\nu EX^{j})/\sqrt{m_{2}}$.
In particular, we have: $c_{1,1}\allowbreak=\allowbreak\sqrt{m_{2}},$
$c_{2,1}\allowbreak=\allowbreak\allowbreak(m_{3}+2cm_{2})/\sqrt{m_{2}}$ and
$c_{3,1}\allowbreak=\allowbreak(m_{4}+3\nu m_{3}+3\nu^{2}m_{2})/\sqrt{m_{2}}.$

iii) Using (\ref{port}), we get
\begin{align*}
h_{2}(x)\allowbreak &  =\allowbreak b_{2}\det%
\begin{bmatrix}
1 & \nu & m_{2}+c^{2}\\
\nu & m_{2}+\nu^{2} & m_{3}+3\nu m_{2}+\nu^{3}\\
1 & x & x^{2}%
\end{bmatrix}
\allowbreak\\
&  =\allowbreak b_{2}(m_{2}(x^{2}-m_{2}-\nu^{2})\allowbreak-\allowbreak
(m_{3}+2\nu m_{2})(x-\nu).
\end{align*}
Since we are interested in orthonormal $h_{2}$ we have to calculate
$Eh_{2}(X)^{2}.$ Simple, but the lengthy calculation gives
\[
Eh_{2}(X)^{2}=b_{2}^{2}m_{2}(m_{2}m_{4}-m_{3}^{3}-m_{2}^{3}).
\]
Hence, we have $b_{2}\allowbreak=\allowbreak1/\left(  \sqrt{m_{2}(m_{2}%
m_{4}-m_{3}^{3}-m_{2}^{3})}\right)  .$ Now, we have $c_{2,2}\allowbreak
=\allowbreak b_{2}EX^{2}h_{2}(X)\allowbreak=\allowbreak b_{2}(m_{2}m_{4}%
-m_{3}^{3}-m_{2}^{3})\allowbreak=\allowbreak\sqrt{(m_{2}m_{4}-m_{3}^{3}%
-m_{2}^{3})}/\sqrt{m_{2}}.$
\end{proof}

\begin{corollary}
Under the assumptions about Lemma \ref{ciag}, and upon applying Kolmogorov
Theorem \ref{Kolmog} we get%

\[
E(X_{\tau}-X_{\tau+t})^{4}=t(\alpha_{1}(8m_{4}m_{2}-6m_{3}^{2})/m_{2}%
-6\alpha_{2}(m_{2}m_{4}-m_{3}^{2}-m_{2}^{3}))+O(t^{2}).
\]
Consequently, the process allows continuous path modification if only
\[
\frac{4m_{4}m_{2}-3m_{3}^{2}}{3(m_{4}m_{2}-m_{3}^{2}-m_{2}^{3})}>0
\]
and%
\begin{equation}
\alpha_{2}=\alpha_{1}\frac{4m_{4}m_{2}-3m_{3}^{2}}{3(m_{4}m_{2}-m_{3}%
^{2}-m_{2}^{3})}. \label{a2}%
\end{equation}
Now let's introduce, popular in mathematical statistics, parameters $\kappa
-$kurtosis (excess kurtosis more precisely) defined as $\kappa\allowbreak
=\allowbreak m_{4}/m_{2}^{2}-3$ and Fisher's skewness parameter $s$ defined as
$s\allowbreak=\allowbreak m_{3}/m_{2}^{3/2}$. (\ref{a2}) takes now, the
following form:%
\begin{equation}
\alpha_{2}\allowbreak=\allowbreak\alpha_{1}\frac{12+4\kappa-3s^{2}}%
{6+3\kappa-3s^{2}}. \label{a2m}%
\end{equation}

\end{corollary}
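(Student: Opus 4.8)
The plan is to read off the corollary from part (iii) of the Proposition by a first-order Taylor expansion in $t$, and then to feed the result into Kolmogorov's Theorem \ref{Kolmog} with $\gamma\allowbreak=\allowbreak4$. First I would take the closed form for $E(X_{\tau}-X_{\tau+t})^{4}$ established in part (iii), in which the only $t$-dependence sits in the two factors $\exp(-\alpha_{1}t)$ and $\exp(-\alpha_{2}t)$, and substitute $\exp(-\alpha_{i}t)\allowbreak=\allowbreak1-\alpha_{i}t+O(t^{2})$ for $i\allowbreak=\allowbreak1,2$.

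Collecting the terms of order $t^{0}$ produces, after clearing the denominator $m_{2}$, the expression $2m_{2}(m_{4}+3m_{2}^{2})-2(4m_{2}m_{4}-3m_{3}^{2})+6(m_{2}m_{4}-m_{3}^{2}-m_{2}^{3})$, and I would verify that this vanishes identically; this cancellation is in any case forced, since the left-hand side equals $0$ at $t\allowbreak=\allowbreak0$. The coefficient of $t^{1}$ is then $\alpha_{1}(8m_{4}m_{2}-6m_{3}^{2})/m_{2}-6\alpha_{2}(m_{2}m_{4}-m_{3}^{2}-m_{2}^{3})/m_{2}$, which gives the claimed expansion $E(X_{\tau}-X_{\tau+t})^{4}\allowbreak=\allowbreak t(\cdots)+O(t^{2})$.

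To apply Theorem \ref{Kolmog} with $\gamma\allowbreak=\allowbreak4$ I need a bound $E(X_{\tau}-X_{\tau+t})^{4}\leq K|t|^{1+\beta}$ with $\beta>0$; since the expansion is a priori only $O(t)$, the one way to lift the leading order to $t^{2}$ (so $\beta\allowbreak=\allowbreak1$) is to annul the coefficient of $t$. Setting $\alpha_{1}(8m_{4}m_{2}-6m_{3}^{2})-6\alpha_{2}(m_{2}m_{4}-m_{3}^{2}-m_{2}^{3})\allowbreak=\allowbreak0$ and solving for $\alpha_{2}$ yields exactly (\ref{a2}); because the framework requires $\alpha_{1}>0$ and $\alpha_{2}>0$, the ratio multiplying $\alpha_{1}$ must be positive, which is precisely the displayed inequality. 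With the linear term removed the remainder is genuinely $O(t^{2})$, so Theorem \ref{Kolmog} furnishes the continuous modification.

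Finally, to reach (\ref{a2m}) I would divide numerator and denominator of the ratio in (\ref{a2}) by $m_{2}^{3}$ and insert $m_{4}/m_{2}^{2}\allowbreak=\allowbreak\kappa+3$ and $m_{3}^{2}/m_{2}^{3}\allowbreak=\allowbreak s^{2}$, turning the numerator into $4(\kappa+3)-3s^{2}\allowbreak=\allowbreak12+4\kappa-3s^{2}$ and the denominator into $3(\kappa+3-s^{2}-1)\allowbreak=\allowbreak6+3\kappa-3s^{2}$. The argument is entirely computational; the only place demanding care is the exact vanishing of the $t^{0}$ term together with the sign bookkeeping in the coefficient of $t$, since a slip there would corrupt both (\ref{a2}) and the accompanying positivity condition.
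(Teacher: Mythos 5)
Your proposal is correct and follows essentially the same route as the paper's (largely implicit) derivation: Taylor-expand the closed form from part (iii) of the Proposition, verify the constant term cancels, annul the coefficient of $t$ to obtain (\ref{a2}) together with the positivity condition forced by $\alpha_{1},\alpha_{2}>0$, invoke Theorem \ref{Kolmog} with $\gamma=4$, $\beta=1$, and substitute $m_{4}/m_{2}^{2}=\kappa+3$, $m_{3}^{2}/m_{2}^{3}=s^{2}$ to reach (\ref{a2m}). Incidentally, your linear coefficient $\alpha_{1}(8m_{4}m_{2}-6m_{3}^{2})/m_{2}-6\alpha_{2}(m_{2}m_{4}-m_{3}^{2}-m_{2}^{3})/m_{2}$ is the one consistent with the Proposition, whereas the paper's display drops the factor $1/m_{2}$ on the $\alpha_{2}$ term --- a harmless slip, since only the vanishing of the coefficient is used.
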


Since the ratio of $\alpha_{2}/\alpha_{1}$ is important in sorting off
stationary processes which do not allow continuous path modification (at least
as far as our simple theory is concerned), let us introduce the following parameter

\begin{definition}
The following coefficient
\[
Cc=\frac{12+4\kappa-3s^{2}}{6+3\kappa-3s^{2}},
\]
will be called \textbf{continuity coefficient}. Above, as before, $s$ denotes
the skewness while $\kappa$ the excess kurtosis of the marginal measure $\mu$
of the $SMPR\left(  \left\{  \alpha_{n}\right\}  ,\mu\right)  $ that we are analyzing.
\end{definition}

Since, as mentioned earlier, if only the support of the marginal measure $\mu$
is unbounded the sequence $\left\{  \exp(-\alpha_{n}t)\right\}  $ must be a
moment sequence.

\begin{corollary}
If the support of the measure $\mu$ is unbounded and $3s^{2}\allowbreak
=\allowbreak2\kappa$ then the only $SMPR\left(  \left\{  \alpha_{n}\right\}
,\mu\right)  $ process having continuous path modifications is the one with
$\alpha_{n}\allowbreak=\allowbreak n\alpha_{1}$. From \cite{SzabStac} it
follows that such process is also a harness.
\end{corollary}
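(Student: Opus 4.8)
The plan is to show that the hypothesis $3s^{2}=2\kappa$ forces the continuity coefficient $C$ to equal $2$, and then to exploit the rigidity of the moment sequence $\{\exp(-\alpha_{n}t)\}$ that is available whenever $\operatorname*{supp}\mu$ is unbounded.

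First I would substitute $3s^{2}=2\kappa$ directly into the expression for $C$ coming from (\ref{a2m}). The numerator becomes $12+4\kappa-3s^{2}=12+4\kappa-2\kappa=2(6+\kappa)$, while the denominator becomes $6+3\kappa-3s^{2}=6+3\kappa-2\kappa=6+\kappa$, so that $C=2$. By the Corollary preceding the definition of the continuity coefficient, i.e. by (\ref{a2}) and (\ref{a2m}), a necessary condition for the process to admit a continuous path modification within the present ($k=2$) Kolmogorov framework is that the coefficient of $t$ in the expansion of $E(X_{\tau}-X_{\tau+t})^{4}$ vanish, namely $\alpha_{2}=\alpha_{1}C$. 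With $C=2$ this reads $\alpha_{2}=2\alpha_{1}$.

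Next I would invoke the unboundedness of $\operatorname*{supp}\mu$: as recalled before Remark \ref{a2a}, this forces $\{\exp(-\alpha_{n}t)\}_{n\geq0}$ to be, for each $t>0$, a moment sequence of a measure supported in $[0,1]$. Writing $b_{n}=\exp(-\alpha_{n}t)$ we have $b_{0}=1$, and the relation $\alpha_{2}=2\alpha_{1}$ gives $b_{2}=\exp(-2\alpha_{1}t)=b_{1}^{2}$. Applying Lemma \ref{momenty} i) with $m=0$ (equivalently, applying Remark \ref{a2a}) we conclude that the representing measure is a single atom, whence $b_{n}=b_{1}^{n}$ for every $n$; that is $\exp(-\alpha_{n}t)=\exp(-n\alpha_{1}t)$ for all $t>0$, and therefore $\alpha_{n}=n\alpha_{1}$ for all $n\geq1$. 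The harness property of the resulting process is then exactly the conclusion of Remark \ref{a2a}, drawn from \cite{SzabStac}.

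The step that requires the most care is the necessity of $\alpha_{2}=\alpha_{1}C$, that is, the word \emph{only} in the statement. Because $(X_{\tau}-X_{\tau+t})^{4}\geq0$ and the quantity vanishes at $t=0$, the coefficient of $t$ in its expansion is nonnegative; if it were strictly positive then $E(X_{\tau}-X_{\tau+t})^{4}$ would be of exact order $|t|$, and Kolmogorov's Theorem \ref{Kolmog} applied with $\gamma=4$ would require $1+\beta\leq1$, i.e. it would yield no positive H\"older exponent, so within this framework continuity could not be established. Hence the linear term must vanish, giving $\alpha_{2}=\alpha_{1}C=2\alpha_{1}$, and the moment-sequence rigidity above then pins down $\{\alpha_{n}\}$ uniquely as the harness sequence $\alpha_{n}=n\alpha_{1}$.
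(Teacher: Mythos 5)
Your proposal is correct and follows essentially the same route as the paper's own proof: substituting $3s^{2}=2\kappa$ into (\ref{a2m}) yields $\alpha_{2}=2\alpha_{1}$, and then the moment-sequence rigidity available for unbounded $\operatorname*{supp}\mu$ (Lemma \ref{momenty} i) with $m=0$, i.e.\ exactly Remark \ref{a2a}) forces $\alpha_{n}=n\alpha_{1}$, with the harness property cited from \cite{SzabStac}. Your final paragraph spelling out the necessity of $\alpha_{2}=\alpha_{1}C$ --- appropriately hedged as holding ``within this framework,'' since Kolmogorov's criterion is only sufficient --- makes explicit a step the paper leaves implicit, but it does not constitute a different approach.
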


\begin{proof}
Notice that, following (\ref{a2m}). when $3s^{2}\allowbreak=\allowbreak
2\kappa$ we must have $\alpha_{2}\allowbreak=\allowbreak2\alpha_{1}$. Now, by
Lemma \ref{momenty} we have $\alpha_{n}\allowbreak=\allowbreak n\alpha_{1}.$
\end{proof}

Now let us show some examples and consider particular cases.

\section{Examples}

\begin{example}
[\textbf{Gaussian case}.]Although it was analyses at the beginning of the
paper, we are returning to this example, once more just to illustrate the
theory developed above on this very simple example of relatively simple
calculations. Let us consider $d\mu(x)\allowbreak=\allowbreak\exp
(-x^{2}/2)dx/\sqrt{2\pi}.$ Consequently, polynomials $h_{n}(x)$ are the
so-called probabilistic Hermite polynomials satisfying the following
three-term recurrence
\[
h_{n+1}(x)\allowbreak=\allowbreak xh_{n}(x)-nh_{n-1}(x),
\]
with $h_{-1}(x)\allowbreak=\allowbreak0$ and $h_{0}(x)\allowbreak
=\allowbreak1.$ It is a common knowledge, that for $\forall n\geq0:$%
\[
x^{j}\allowbreak=\allowbreak j!\sum_{m=0}^{\left\lfloor j/2\right\rfloor
}\frac{1}{2^{m}m!(j-2m)!}h_{j-2m}(x).
\]
Consequently, the numbers $c_{j,n}$ are given by%
\begin{equation}
c_{j,n}=\left\{
\begin{array}
[c]{ccc}%
0 & \text{if } & n>j\text{ or }j-n\text{ odd}\\
\frac{j!}{2^{(j-n)/2}((j-n)/2)!\sqrt{n!}} & \text{if} & j-n\text{ is even}%
\end{array}
\text{.}\right.  \label{GHjn}%
\end{equation}
There is $\sqrt{n!}$ in the denominator since polynomials $h_{n}$ are not
orthonormal. They become orthonormal after dividing $n-$th polynomial by
$\sqrt{n!}$. We have $\kappa\allowbreak=\allowbreak0$ and $s\allowbreak
=\allowbreak0$ so $\alpha_{2}\allowbreak=\allowbreak2\alpha_{1}.$ From the
Remark \ref{a2a} it follows that we must have $\alpha_{n}\allowbreak
=\allowbreak n\alpha_{1}$ and consequently we must be dealing with the
Ornstein -Uhlenbeck (OU) process. We can thus conclude that the
Ornstein-Uhlenbeck process is the only SMPR process with Gaussian marginals
that allow continuous path modification.
\end{example}

\begin{example}
[\textbf{Gamma distribution}.]Let us consider the Gamma distribution with rate
parameter zero and shape parameter $\beta>0,$ i.e., the distribution with the
following density:%
\[
f_{g}(x,\beta)\allowbreak=\allowbreak x^{\beta-1}\exp(-x)/\Gamma(\beta),
\]
for $x>0$ and $0$ otherwise. In order to simplify notation let us introduce
the so-called raising factorial, which is the following function:%
\[
(x)^{(n)}=x(x+1)\ldots(x+n-1),
\]
defined for all complex $x.$ Notice, that we have for all $x\neq0:$%
\[
(x)^{(n)}=\frac{\Gamma(x+n)}{\Gamma(x)}.
\]
It is well known that if $X\sim f_{g}(x,\beta),$ then $EX^{j}\allowbreak
=\allowbreak(\beta)^{(j)}.$ Thus we have $m_{2}\allowbreak=\allowbreak\beta,$
$m_{3}\allowbreak=\allowbreak2\beta,$ $m_{4}\allowbreak=\allowbreak
3\beta(2+\beta).$ Hence, the skewness parameter $s$ is equal to $2/\sqrt
{\beta}$ and the kurtosis $\kappa\allowbreak=\allowbreak3(2+\beta
)/\beta-3\allowbreak=\allowbreak6/\beta.$ Consequently, the parameter
$\alpha_{2}/\alpha_{1}$ is equal to%
\[
\frac{12+4\kappa-3s^{2}}{6+3\kappa-3s^{2}}=\frac{12+24/\beta-12/\beta
}{6+18/\beta-12/\beta}=2.
\]

Since the support of Gamma distribution is unbounded, we deduce that $\forall
n\geq1:\alpha_{n}\allowbreak=\allowbreak n\alpha_{1}$ and following
\cite{SzabStac}(Thm. 2) such process is also a harness.

Moreover, we know also that the polynomials $h_{n}(x|\beta)$ are proportional
to the so-called generalized Laguerre polynomials $L_{n}^{(\beta)},$ defined
by the following recurrence relation
\[
\allowbreak L_{n+1}^{(\beta)}=\frac{2n+\beta-x}{n+1}L_{n}^{(\beta)}%
-\frac{n+\beta-1}{n+1}L_{n-1}^{(\beta)},
\]
with $L_{-1}^{(\beta)}\allowbreak=\allowbreak0\allowbreak$ and $L_{0}%
^{(\beta)}\allowbreak=\allowbreak1.$ We also know that
\[
\frac{1}{\Gamma(\beta)}\int_{0}^{\infty}L_{n}^{(\beta}(x)L_{m}^{(\beta
)}(x)x^{\beta-1}\exp(-x)dx=\frac{\Gamma(n+\beta)}{n!}\delta_{nm}.
\]
Thus
\[
h_{n}(x|\beta)=\sqrt{\frac{n!}{\Gamma(n+\beta)}}L_{n}^{(\beta)}(x).
\]
Moreover, we also know the exact form of the two-dimensional density of such
$SMPR\left(  \left\{  n\alpha_{1}\right\}  ,f_{g}\right)  $ process. It has
complicated form and is commonly known under the name Hardy-Hille formula.
Namely, we have
\begin{gather*}
f_{g}(x|\beta)f_{g}(x|\beta)\sum_{j\geq0}h_{j}(x|\beta)h_{j}(y|\beta
)\exp(-n\alpha_{1}t)\\
=\frac{1}{(1-\exp(-\alpha_{1}t))(xy\exp(-\alpha_{1}t))^{(\beta-1)/2}}%
\exp(-(x+y)\\
\times\frac{\exp(-\alpha_{1}t)}{1-\exp(-\alpha_{1}t)})I_{\beta-1}\left(
\frac{2\exp(-\alpha_{1}t/2)\sqrt{xy}}{1-\exp(-\alpha_{1}t)}\right)  ,
\end{gather*}
where $I_{\alpha}$ denotes modified Bessel function of the first kind.
Recently, it has been shown in \cite{Szab22} that, as in the Gaussian case, we
have
\[
E(X_{\tau}-X_{\tau+t})^{2k}=\frac{(2k)!}{k!}\left(  \beta\right)  ^{\left(
k\right)  }(1-\exp(-\alpha_{1}t))^{k}.
\]
Hence, the gamma process with transition distribution given above is as smooth
as the Ornstein-Uhlenbeck process.
\end{example}

\begin{example}
[\textbf{Laplace distribution.}]\textbf{ }Laplace distribution is defined by
its density given by the formula $f_{L}(x)\allowbreak=\allowbreak
\exp(-\left\vert x\right\vert )/2,$ for $x\in\mathbb{R}$. It is also known
that it is a symmetric distribution, hence all its odd moments are equal to
$\ 0$ while all even moments, say of degree $2n$ are equal to $(2n)!.$
Consequently, we have $m_{2}\allowbreak=\allowbreak2,$ $m_{3}\allowbreak
=\allowbreak0$ and $m_{4}\allowbreak=\allowbreak24$, hence $\kappa
\allowbreak=\allowbreak3,$ and $s\allowbreak=\allowbreak0.$ Thus, we have
\[
\alpha_{2}/\alpha_{1}=\frac{12+4\times3}{6+3\times3}=8/5<2.
\]
Let us denote by $\left\{  h_{n}\right\}  _{n\geq0}$ the family of polynomials
orthonormal with respect to the measure with the density $f_{L}$. Thus, for
every moment sequence $\{1,\exp(-\alpha_{1}t),\allowbreak\exp(-\alpha
_{2}t),\ldots\},$ such that the bivariate function:%
\begin{equation}
1+\sum_{j\geq1}\exp(-\alpha_{j}t)h_{j}(x)h_{j}(y), \label{copula}%
\end{equation}
with $\alpha_{2}\allowbreak=\allowbreak8\alpha_{1}/5$ is nonnegative for all
$t,$ the $SMPR(\left\{  \alpha_{n}\right\}  ,f_{L})$ allows continuous path
modification with $E(X_{t+\tau}-X_{\tau})^{4}\allowbreak=\allowbreak
O(t^{2}).$

Interestingly, if we calculate (with the help of Mathematica) the coefficients
$\left\{  c_{i,n}\right\}  _{i=0,\ldots,6,n=0,\ldots,i},$ which can be easily
done using moments of the Laplace distribution, and then solving system of
equations (\ref{rown}) we get
\[
\alpha_{2}/\alpha_{1}=(35-\sqrt{105})/28,~\alpha_{3}/\alpha_{1}=(15-\sqrt
{105})/12.
\]
Consequently, every $SMPR(\left\{  \alpha_{n}\right\}  ,f_{L})$ with these
parameters $\alpha_{2},\alpha_{3}$ ( for which naturally function
(\ref{copula}) is nonnegative), would allow continuous path modification and
$E(X_{t+\tau}-X_{\tau})^{6}\allowbreak=\allowbreak O(t^{3}).$

We can continue this procedure. There are however some numerical problems
since there is not known a nice general form of polynomials that are
orthonormal with respect to the Laplace distribution.
\end{example}

\begin{example}
[\textbf{Beta distribution.}]So first let us start with the general case of
the Beta distribution. Under this name function, two distributions related to
one another. Namely, primarily the distribution with the density%
\[
f_{\beta}(x|\alpha,\beta)=x^{\gamma-1}(1-x)^{\beta-1}/B(\gamma,\beta),
\]
for $x\in(0,1)$ and $\gamma,\beta>0,$ where $B(\gamma,\beta)\allowbreak
=\allowbreak\Gamma(\gamma)\Gamma(\beta)/\Gamma(\gamma+\beta)$ is the so-called
beta function. Often under the name Beta distribution also works the
distribution with the following density%
\begin{equation}
f_{B}(x|\gamma,\beta)=2^{1-\gamma-\beta}(1+x)^{\gamma-1}(1-x)^{\beta
-1}/B(\gamma,\beta), \label{betaC}%
\end{equation}
for $x\in(-1,1)$ and $\gamma,\beta>0$. As one can easily notice if random
variable $X\sim f_{\beta}$ then $Y\allowbreak=\allowbreak2X-1$ has density
$f_{B}.$ It is also known that the so-called Jacobi polynomials are orthogonal
with respect to the measure with the density $f_{B}.$ In particular, we know
that
\[
f_{B}(x|1/2,1/2)\allowbreak=\allowbreak\frac{1}{\pi\sqrt{1-x^{2}}},
\]
that is, we are dealing with the so-called arcsine distribution while
$f_{B}(x|3/2,3/2)\allowbreak=\allowbreak\frac{2}{\pi}\sqrt{1-x^{2}}$ is known
under the name Wigner or semicircle distribution. It is known (see e.g.
\cite{Kotz95}) that the skewness of the beta distribution is equal to:%
\[
s=\frac{2(\beta-\gamma)\sqrt{(\gamma+\beta+1}}{(\gamma+\beta+2)\sqrt
{\gamma\beta}},
\]
while the excess kurtosis is given by the formula;%
\[
\kappa=\frac{6\left(  (\gamma-\beta\right)  ^{2}(\gamma+\beta+1)-(\gamma
+\beta+2)\gamma\beta)}{\gamma\beta(\gamma+\beta+2)(\gamma+\beta+3)}.
\]

Now
\begin{equation}
\frac{12+4\kappa-3s^{2}}{6+3\kappa-3s^{2}}=\frac{2(\gamma+\beta+1)}%
{\gamma+\beta}=2+\frac{2}{\gamma+\beta}. \label{war}%
\end{equation}

\end{example}

\begin{example}
[\textbf{Arcsine distribution.}]\textbf{ }Arcsine distribution is an example
of the beta distribution with parameters $\gamma\allowbreak=\allowbreak1/2,$
$\beta\allowbreak=\allowbreak1/2.$ Let us recall that in this case, the
sequence of polynomials orthogonal with respect to the measure with the
density $f_{B}(1/2,1/2)$ are the so-called Chebyshev polynomials of the first
kind $\left\{  T_{n}(x)\right\}  _{n\geq-1}$. Following \cite{Mas2003} let us
recall the most important properties of these polynomials. They are defined by
the following three-term recurrence
\[
2xT_{n}(x)=T_{n+1}(x)+T_{n-1}(x),
\]
with $T_{0}(x)\allowbreak=\allowbreak1$ and $T_{1}(x)\allowbreak=\allowbreak
x.$ What is however important for our purposes, is the following property of
polynomials $\left\{  T_{n}\right\}  _{n\geq0}.$ Namely, we have:%
\[
T_{n}(\cos\varphi)=\cos(n\varphi).
\]
for $n\geq0$ and $\varphi\in\mathbb{R}$. Moreover, it is known that
\[
\frac{1}{\pi}\int_{-1}^{1}T_{n}(x)T_{m}(x)\frac{1}{\sqrt{1-x^{2}}}dx=\left\{
\begin{array}
[c]{ccc}%
1 & \text{if} & n=m=0\\
\delta_{mn}/2 & \text{if } & n+n>0
\end{array}
\right.  .
\]
Thus, polynomials are now given by $h_{n}(x)\allowbreak=\allowbreak\sqrt
{2}T_{n}(x)$ for $n\geq1.$

As one can see in order to have a continuous path modification of the
$SMPR(\left\{  \alpha_{n}\right\}  ,\allowbreak f_{B}(1/2,1/2))$ one has to
have $\alpha_{2}\allowbreak=\allowbreak4\alpha_{1}.$ We will define the
sequence of numbers $\left\{  \alpha_{n}\right\}  _{n\geq1}$ that satisfies
the above-mentioned conditions and above all will lead to a nonnegative
transition density. To do it, let us recall the definition of the Jacobi Theta
function. In fact, there exist different definitions of this function. The
differences are minor and concern details. For the clarity of notation let us
select the one provided by Wolfram MathWorld with a slight modification.
Namely, we define for $\left\vert q\right\vert \,<1$ and complex $\alpha:$
\[
\theta(q;\alpha)\allowbreak=\allowbreak\sum_{j=-\infty}^{\infty}q^{j^{2}}%
\exp(2ij\alpha).
\]
One can easily notice, that
\begin{equation}
\theta(q;\alpha)=1+2\sum_{j=1}^{\infty}q^{j^{2}}\cos(2j\alpha). \label{theta}%
\end{equation}
Let us also recall the following so-called triple product identity (see, e.g.,
\cite{Abr1966}) that reads:%
\begin{equation}
\theta(q;\alpha)=\prod_{m=1}^{\infty}(1-q^{2m})(1+2\cos(2\alpha)q^{2m-1}%
+q^{4m-2}). \label{tpi}%
\end{equation}
Thus we deduce that for $\left\vert q\right\vert <1$ and $\alpha\in\mathbb{R}%
$, w have $\theta(q;\alpha)>0$. Consequently, we have in terms of the new
variables $\varphi$ and $\phi:$ $2T_{n}(\cos\varphi)T_{n}(\cos\phi
)\allowbreak=\allowbreak2\cos(n\varphi)\cos(n\phi)\allowbreak=\allowbreak
\cos(n(\varphi-\phi))\allowbreak+\allowbreak\cos(n(\varphi+\phi))$ and
\begin{gather*}
1+2\sum_{n\geq1}\rho^{n^{2}}T_{n}(\cos\varphi)T_{n}(\cos\phi)\\
=1+\sum_{n\geq1}\rho^{n^{2}}\cos(n(\varphi-\phi))+\sum_{n\geq1}\rho^{n^{2}%
}\cos(n(\varphi+\phi))\\
=(\theta(\rho;((\varphi-\phi))+\theta(\rho;((\varphi+\phi)))/2.
\end{gather*}
Thus, returning to variables $x$ and $y,$ we get:
\begin{gather}
1+2\sum_{n\geq1}\rho^{n^{2}}T_{n}(x)T_{n}(y)=\label{2wymT}\\
\frac{1}{2}\left(  \theta(\rho;(\arccos(x)-\arccos(y))+\theta(\rho
;(\arccos(x)+\arccos(y))\right)  .\nonumber
\end{gather}
Setting $\rho\allowbreak=\allowbreak\exp(-\alpha_{1}t)$ in \ref{2wymT} and
multiplying by, say $\frac{1}{\pi\sqrt{1-y^{2}}},$ we get a transitional
density of $X_{t+\tau}$ given $X_{\tau}\allowbreak=\allowbreak y.$ Moreover,
we have for some $\alpha_{1}>0$, all $n\geq0$ and almost all (mod $\mu)$
$y\in\lbrack-1,1]$ the following relationships%
\[
E(T_{n}(X_{t+\tau})|X_{\tau}=y)=\exp(-\alpha_{1}tn^{2})T_{n}(y).
\]

\begin{remark}
Let us note that we have $E(X_{t+\tau}-X_{\tau})^{4}\allowbreak=\allowbreak
O(t^{2})$ as it follows directly from (\ref{war}). However, we know the
parameters $c_{i,n}$ defined above since we have:%
\[
x^{j}\allowbreak=\allowbreak2^{1-j}\sum_{\substack{n=0\\j\equiv
n,~\operatorname{mod}2}}^{\prime}\binom{j}{(j-n)/2}h_{n}(x)\overset{df}{=}%
\sum_{n=0}^{j}c_{i,n}h_{n}(x),
\]
where the $^{\prime}$ above the sum means that when $n\allowbreak
=\allowbreak0$ then the appropriate coefficient is divided by $2.$ Thus
consequently, we can, using Mathematica, check that for $k\allowbreak
=\allowbreak3,4,5$ we have also $E(X_{t+\tau}-X_{\tau})^{2k}\allowbreak
=\allowbreak O(t^{k}).$ Thus, one can express the following conjecture:

\begin{conjecture}
Let us consider $\mathbf{X\allowbreak=\allowbreak}SMPR\left(  \left\{
n^{2}\alpha_{1}\right\}  ,f_{B}(x|1/2,1/2\right)  ).$ Then for all $k\geq1$ we
have $E(X_{t+\tau}-X_{\tau})^{2k}\allowbreak=\allowbreak O(t^{k}).$
\end{conjecture}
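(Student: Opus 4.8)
The plan is to change variables to the angle $\phi=\arccos x$, in which coordinate the process becomes (reflecting) Brownian motion and the estimate is transparent. Set $\Phi_s=\arccos X_s\in[0,\pi]$. Since the arcsine law $f_B(\cdot\,|1/2,1/2)$ is the push\-forward of the uniform law on $[0,\pi]$ under $\phi\mapsto\cos\phi$, and since $T_n(\cos\phi)=\cos(n\phi)$, formula (\ref{2wymT}) with $\rho=\exp(-\alpha_1t)$ and $\alpha_n=n^2\alpha_1$ shows that the joint density of $(\Phi_\tau,\Phi_{\tau+t})$ on $[0,\pi]^2$ is
\[
p_t(\phi,\psi)=\frac{1}{\pi^2}\Big(1+2\sum_{n\ge1}e^{-n^2\alpha_1t}\cos(n\phi)\cos(n\psi)\Big).
\]
This is exactly $1/\pi$ times the Neumann (reflecting) heat kernel on $[0,\pi]$ for the generator $\alpha_1\partial_\phi^2$; equivalently $\mathbf X\overset{d}{=}(\cos\Phi_s)$ with $\Phi$ a reflecting Brownian motion, and every moment we need reduces to an integral of $(\cos\psi-\cos\phi)^{2k}$ against $p_t$.

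The second step is to replace the spectral series by its image (Gaussian) representation, obtained from Jacobi's transformation of the theta function in (\ref{theta})--(\ref{tpi}) (Poisson summation):
\[
p_t(\phi,\psi)=\frac{1}{\pi}\sum_{\nu\in\mathbb{Z}}\big(g_t(\psi-\phi+2\pi\nu)+g_t(\psi+\phi+2\pi\nu)\big),\qquad g_t(u)=(4\pi\alpha_1t)^{-1/2}\exp\!\big(-u^2/(4\alpha_1t)\big).
\]
Inserting this into $E(X_{\tau+t}-X_\tau)^{2k}=\iint_{[0,\pi]^2}(\cos\psi-\cos\phi)^{2k}p_t\,d\phi\,d\psi$, I would treat the three kinds of terms separately. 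For the diagonal term ($\nu=0$, $g_t(\psi-\phi)$) I use the Lipschitz bound $|\cos\psi-\cos\phi|\le|\psi-\phi|$ together with the Gaussian moment $\int_{\mathbb R}|u|^{2k}g_t(u)\,du=(2k-1)!!(2\alpha_1t)^k$, which immediately gives a contribution $\le(2k-1)!!(2\alpha_1t)^k=O(t^k)$. The far images ($|\nu|\ge1$) have arguments bounded away from $0$ uniformly on $[0,\pi]^2$, hence contribute $O(e^{-c/t})$.

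The main obstacle, and the one point deserving care, is the boundary reflection term $g_t(\psi+\phi)$ near the corner $(0,0)$ (and symmetrically $g_t(\psi+\phi-2\pi)$ near $(\pi,\pi)$), where the diagonal of $g_t$ meets the domain. There the Gaussian forces $\phi+\psi=O(\sqrt{t}\,\log(1/t))$ on its effective support, so both $\phi,\psi$ are $O(\sqrt t\,\log(1/t))$; but precisely there $\cos$ is flat, $\cos\psi-\cos\phi=\tfrac12(\phi^2-\psi^2)+O(\phi^4+\psi^4)$, whence $|\cos\psi-\cos\phi|\lesssim \phi^2+\psi^2=O(t\,\log^2(1/t))$ and $(\cos\psi-\cos\phi)^{2k}=O\big(t^{2k}\log^{4k}(1/t)\big)=o(t^k)$. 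Thus the endpoint flattening of $\cos$ beats the loss of the diagonal at the reflecting barrier, and this term is negligible. Combining the three estimates yields $E(X_{\tau+t}-X_\tau)^{2k}=O(t^k)$ (indeed $\sim c_k t^k$), which is the claim; feeding it into Kolmogorov's Theorem \ref{Kolmog} with $\gamma=2k$, $\beta=k-1$ confirms $\delta$--H\"older paths for every $\delta<\tfrac{k-1}{2k}$, i.e.\ up to $\delta<1/2$.

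I note for completeness a purely algebraic reformulation that could serve as an alternative (it is the route the excerpt verifies numerically): writing $(\cos\psi-\cos\phi)^{2k}=\sum_{a,b}\widehat g_{a,b}\cos(a\phi)\cos(b\psi)$, the expansion (\ref{expan}) together with $\alpha_n=n^2\alpha_1$ reduces the statement to the finite moment identities $\sum_{m=1}^{2k}m^{2s}\,\widehat g_{m,m}=0$ for $s=1,\dots,k-1$, i.e.\ to the vanishing of the first $k-1$ even moments of the diagonal Fourier coefficients of $(\cos\psi-\cos\phi)^{2k}$. The heat\-kernel argument above is in effect an analytic proof of exactly these identities and is, in my view, the cleaner path to the general $k$.
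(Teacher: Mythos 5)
You should know at the outset that the paper contains no proof of this statement: it appears there as a \emph{conjecture}, supported only by the $k=2$ case (which follows from the continuity coefficient computation (\ref{war})) and by symbolic Mathematica verification of the finite algebraic conditions (\ref{rown}) for $k=3,4,5$, using the explicit Chebyshev expansion of $x^j$. Your argument therefore does strictly more than the paper, and its route is sound. The key identification is correct: since $h_n=\sqrt{2}\,T_n$, $T_n(\cos\phi)=\cos(n\phi)$ and $\alpha_n=n^2\alpha_1$, the pushforward of the bivariate law (\ref{Lanc}) under $\arccos$ is $\pi^{-1}$ times the Neumann heat kernel on $[0,\pi]$ for $\alpha_1\partial_\phi^2$, and Poisson summation (equivalently, Jacobi's imaginary transformation of (\ref{theta})) yields your image representation --- note the paper uses the triple product identity (\ref{tpi}) only to get positivity of the transition density, never to estimate moments. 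Your three-way split then delivers the claim: the $\nu=0$ difference term gives $(2k-1)!!\,(2\alpha_1 t)^k$ via the Lipschitz bound, the far images are $O(e^{-c/t})$, and the corner reflection terms are annihilated by the flatness of $\cos$ at $\pm1$ --- this last point is exactly the place where a naive argument would fail, and you identified and resolved it. What your approach buys over the paper's: a proof for all $k$ simultaneously (the paper's method can only ever verify finitely many $k$, one at a time), sharp asymptotics $\sim c_k t^k$ hence H\"older paths of every order $\delta<1/2$ as for the Ornstein--Uhlenbeck process, and a structural explanation (the process is the cosine of a reflecting Brownian motion). Your closing remark also correctly isolates the identities $\sum_{m=1}^{2k}m^{2s}\widehat{g}_{m,m}=0$, $s=1,\dots,k-1$, equivalent via (\ref{expan}) to what the paper checks numerically; your heat-kernel argument is indeed an analytic proof of them.

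One step should be tightened before this counts as a complete proof: ``the Gaussian forces $\phi+\psi=O(\sqrt{t}\log(1/t))$ on its effective support'' is a heuristic, and you must either add the routine tail bound (off that region $g_t\le t^{N}$ for any fixed $N$, while $(\cos\psi-\cos\phi)^{2k}\le 2^{2k}$) or, more cleanly, dispense with cutoffs via the product identity $\cos\psi-\cos\phi=-2\sin\frac{\psi+\phi}{2}\sin\frac{\psi-\phi}{2}$. Since $|\sin\theta|\le\operatorname{dist}(\theta,\pi\mathbb{Z})$, this gives $|\cos\psi-\cos\phi|\le|u|$ where $u$ is the argument of \emph{whichever} image Gaussian $g_t(u)$ is being integrated, for the sum-type images ($u=\psi+\phi+2\pi\nu$, since $|\sin\frac{\psi-\phi}{2}|\le1$) just as for the difference-type ones. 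Hence every term of the image sum, corners included, is dominated by
\begin{equation*}
\frac{1}{\pi}\iint_{[0,\pi]^2}|u|^{2k}g_t(u)\,d\phi\,d\psi\le\int_{\mathbb{R}}|u|^{2k}g_t(u)\,du=(2k-1)!!\,(2\alpha_1t)^k,
\end{equation*}
and summing the two families over $\nu$ (for fixed $\phi$, the arguments range over disjoint translates of intervals) gives $E(X_{\tau+t}-X_\tau)^{2k}\le 2(2k-1)!!\,(2\alpha_1 t)^k$. This one-line bound subsumes your three cases and closes the only gap; your finer corner estimate $O(t^{2k}\log^{4k}(1/t))$ remains useful if you want the exact leading constant $c_k$.
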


Thus, this process is as 'smooth' as the Ornstein-Uhlenbeck process. It is
not, however, a harness.
\end{remark}

\begin{remark}
Notice also, that the sequence $\left\{  \exp(-n^{2}\alpha_{1})\right\}
_{n\geq0}$ is not a moments' sequence. This is so, for example, the matrix
$\left[
\begin{tabular}
[c]{ll}%
$1$ & $\exp(-a_{1}t)$\\
$\exp(-a_{1}t)$ & $\exp(-4a_{1}t)$%
\end{tabular}
\ \right]  $ is not nonnegative definite. Recall, that this feature of
$SMPR\left(  \left\{  \alpha_{n}\right\}  ,\mu\right)  $, as pointed out in
\cite{Szabl21}, has been allowed because the support of the measure $\mu$ is
bounded in this case.
\end{remark}
\end{example}

\begin{example}
[\textbf{Semicircle distribution.}]\textbf{ }As mentioned above, the
semicircle or Wigner distribution is another example of a beta distribution,
this time we with $\alpha\allowbreak=\allowbreak3/2$ and $\beta\allowbreak
=\allowbreak3/2$ which leads to the following distribution with the density
\[
f_{B}(x|3/2,3/2)\allowbreak=f_{W}(x)=\allowbreak\frac{2}{\pi}\sqrt{1-x^{2}}.
\]
Further, we know that the family of polynomials that are orthogonal with
respect to the Wigner measure is the family of Chebyshev polynomials of the
second kind , that is the family $\left\{  U_{n}(x)\right\}  _{n\geq0},$
satisfying the following three-term recurrence
\[
2xU_{n}(x)=U_{n-1}(x)+U_{n+1}(x),
\]
with $U_{-1}(x)\allowbreak=\allowbreak0,$ $U_{0}(x)\allowbreak=\allowbreak1.$
Besides, we also have:%
\[
\int_{-1}^{1}U_{n}(x)U_{m}(x)f_{W}(x)dx=\delta_{nm}.
\]
Hence, the family $\left\{  U_{n}\right\}  $ constitutes an orthonormal family.

Now notice that setting $\gamma\allowbreak=\allowbreak\beta\allowbreak
=\allowbreak3/2$ in (\ref{war}) we get $Cc\allowbreak=\allowbreak8/3.$ Further
finding that
\begin{align*}
x  &  =U_{1}(x)/2,~x^{2}=\frac{1}{2^{2}}(U_{2}(x)+1),~x^{3}=\frac{1}{2^{3}%
}(U_{3}(x)+2U_{1}(x)),\\
x^{4}  &  =\frac{1}{2^{4}}(U_{4}(x)+3U_{2}(x)+2),~x^{5}=\frac{1}{2^{5}}%
(U_{5}(x)+4U_{3}(x)+5U_{1}(x)),\\
x^{6}  &  =\frac{1}{2^{6}}(U_{6}(x)+5U_{4}(x)+9U_{2}(x)+5),\\
x^{7}  &  =\frac{1}{2^{7}}(U_{7}(x)+6U_{5}(x)+14U_{3}(x)+14U_{1}(x)),\\
x^{8}  &  =\frac{1}{2^{8}}(U_{8}(x)+7U_{6}(x)+20U_{4}(x)/2^{6}+28U_{2}(x)+14),
\end{align*}
we can, using formula (\ref{rown}), find that $\alpha_{3}/\alpha
_{1}\allowbreak=\allowbreak5,$ $\alpha_{4}/\alpha_{1}\allowbreak
=\allowbreak8.$ So generalizing we will consider the following function
\[
f(x,y|\alpha)=\frac{4}{\pi^{2}}\sqrt{(1-x^{2})(1-y^{2})}\sum_{n\geq0}%
\exp(-\alpha n(n+2)/3)U_{n}(x)U_{n}(y),
\]
as the candidate for the density of the bivariate measure $\rho$ according to
the formula (\ref{Lanc}). First of all, notice that $\int_{-1}^{1}\int%
_{-1}^{1}f(x,y|\alpha)dxdy\allowbreak=\allowbreak1$ for all $\alpha>0.$ Thus
is remains to show that $f(x,y|\alpha)\geq0$ for all $x,y\in\lbrack-1,1]$. in
order to simplify notation let us consider an auxiliary function:%
\[
g(x,y|\rho)=\sum_{n\geq0}\rho^{n(n+2)}U_{n}(x)U_{n}(y).
\]
We immediately notice that, since we have $n(n+2)\allowbreak=\allowbreak
(n+1)^{2}-1,$ $U_{n}(\cos x)\allowbreak=\allowbreak\frac{\sin(n+1)x}{\sin x}$
and $2\sin(\gamma)\sin(\phi)\allowbreak=\allowbreak\cos(\gamma-\phi
)\allowbreak-\allowbreak\cos(\gamma+\phi),$ that
\begin{align*}
g(\cos\gamma,\cos\phi|\rho)  &  =\frac{1}{4\rho\sin(\gamma)\sin(\phi)}\left(
2\sum_{n=1}\rho^{n^{2}}\cos(n(\gamma-\phi))-2\sum_{n=1}\rho^{n^{2}}%
\cos(n(\gamma+\phi))\right) \\
&  =\frac{1}{4\rho\sin(\gamma)\sin(\phi)}\left(  \left(  \theta(\rho
;(\gamma-\phi)/2\right)  -\theta(\rho;(\gamma+\phi)/2)\right)  .
\end{align*}
Now notice that the following (\ref{tpi}) function $\theta(\rho;\gamma/2)$
behaves like a cosine function for positive $\rho$ and $\gamma\in(0,\pi)$ in
the sense that it increases and decreases on exactly the same intervals. In
particular, it means that both functions have derivatives of the same signs.
This leads to the conclusion that
\[
\frac{\theta(\rho;\gamma/2)-\theta(\rho;\phi/2)}{\cos\gamma-\cos\phi}\geq0,
\]
for $\rho\in(0,1)$ and $\gamma,\phi\in(0,\pi).$ But we have%
\[
g(\cos\gamma,\cos\phi|\rho)=\frac{\left(  \theta(\rho;(\gamma-\phi)/2\right)
-\theta(\rho;(\gamma+\phi)/2)}{2\rho(\cos(\gamma-\phi)-\cos(\gamma+\phi))}.
\]
Returning to variables $x$ and $y$ we get:%
\begin{gather*}
g(x,y|\rho)=\left(  \left(  \left(  \theta(\rho;(\arccos(x)-\arccos
(y))/2\right)  -\theta(\rho;(\arccos(x)+\arccos(y))/2\right)  \right) \\
/\left(  4\rho\sqrt{(1-x^{2})(1-y^{2})}\right)  ,
\end{gather*}

consequently, transition probabilities density is equal to:%
\[
\eta(x|y,t)\allowbreak=\allowbreak\frac{\left(  \left(  \theta(e^{-\alpha
t};(\arccos(x)-\arccos(y))/2\right)  -\theta(e^{-\alpha t};(\arccos
(x)+\arccos(y))/2\right)  }{4\exp(-\alpha t)\sqrt{1-y^{2}}},
\]
and we have the following relationships
\[
E(U_{n}(X_{t+\tau})|X_{t}\allowbreak=\allowbreak y)=\exp(-\alpha
tn(n+2))U_{n}(y),
\]
for some $\alpha>0,$ $n\geq0$ and almost all (mod $\mu)$ $y\in
\operatorname*{supp}\mu$.
\end{example}

\begin{remark}
Let us note that we have $E(X_{t+\tau}-X_{\tau})^{4}\allowbreak=\allowbreak
O(t^{2})$ as it follows directly from (\ref{war}). However, we can find
$c_{i,n}$ defined above for several $i$ and $n$ numerically performing
Cholesky decomposition of the moment matrix. The $2n$-th moment of the
semicircle distribution is know and is equal to, depending on the radius $r,$
is equal $r^{2n}C_{n}/4^{n}.$ Here $C_{n}$ denotes $n-$th Catalan number and
by the Wigner distribution with radius $r$ we mean the one with the following
density defined for $\left\vert x\right\vert \leq r:$%
\[
f_{W}(x|r)=\frac{2}{\pi r^{2}}\sqrt{r^{2}-x^{2}}.
\]
Thus consequently, we can, using Mathematica, check that for $k\allowbreak
=\allowbreak3,4,5$ that also $E(X_{t+\tau}-X_{\tau})^{2k}\allowbreak
=\allowbreak O(t^{k}).$ Thus one can utter the following conjecture:
\end{remark}

\begin{conjecture}
Let us consider $\mathbf{X\allowbreak=\allowbreak}SMPR\left(  \left\{
n(n+2)\alpha\right\}  ,f_{B}(x|3/2,3/2\right)  ).$ Then for all $k\geq1$ we
have $E(X_{t+\tau}-X_{\tau})^{2k}\allowbreak=\allowbreak O(t^{k}).$
\end{conjecture}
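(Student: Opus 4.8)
The plan is to reduce the claim to a statement about the order of vanishing of a single polynomial under one second-order differential operator. Fix $y\in[-1,1]$, write $F_y(x)=(x-y)^{2k}$, and set
\[
\Phi(y,t)=E\big((X_{t+\tau}-y)^{2k}\mid X_\tau=y\big).
\]
By stationarity and the tower property, $E(X_{t+\tau}-X_\tau)^{2k}=\int_{-1}^{1}\Phi(y,t)f_W(y)\,dy$, so it suffices to show $\Phi(y,t)=O(t^{k})$ uniformly in $y\in[-1,1]$. Let $P_t$ be the transition operator $(P_tf)(y)=E(f(X_{t+\tau})\mid X_\tau=y)$, so that $\Phi(y,t)=(P_tF_y)(y)$. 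The eigenrelation recorded at the end of this example reads $P_tU_n=e^{-\alpha_n t}U_n$ with $\alpha_n=\alpha\,n(n+2)$. I would then introduce the operator $\mathcal{G}=\alpha\big((1-x^2)\tfrac{d^2}{dx^2}-3x\tfrac{d}{dx}\big)$; the Chebyshev differential equation $(1-x^2)U_n''-3xU_n'+n(n+2)U_n=0$ gives $\mathcal{G}U_n=-\alpha_n U_n$, so $P_t$ and $e^{t\mathcal{G}}$ act identically on every $U_n$.

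The structural point that makes everything finite-dimensional is that $\mathcal{G}$ maps the space $\mathcal{P}_{2k}$ of polynomials of degree $\le 2k$ into itself: the factor $1-x^2$ has degree $2$ and $\tfrac{d^2}{dx^2}$ lowers degree by $2$, while $x$ has degree $1$ and $\tfrac{d}{dx}$ lowers degree by $1$, so neither term raises the degree. Since $\{U_0,\dots,U_{2k}\}$ is a basis of $\mathcal{P}_{2k}$ consisting of eigenvectors, $P_t$ restricted to $\mathcal{P}_{2k}$ equals the matrix exponential $e^{t\mathcal{G}}$, and the expansion $P_tF_y=\sum_{s\ge0}\tfrac{t^s}{s!}\mathcal{G}^sF_y$ is an identity between polynomials, convergent for every $t$. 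Evaluating at $x=y$ yields $\Phi(y,t)=\sum_{s\ge0}\tfrac{t^s}{s!}(\mathcal{G}^sF_y)(y)$, reducing the whole problem to controlling the numbers $(\mathcal{G}^sF_y)(y)$.

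The heart of the argument is then an elementary lemma proved by induction on $s$: for each $s\ge0$, the polynomial $\mathcal{G}^sF_y$ vanishes at $x=y$ to order at least $2(k-s)$. Indeed $F_y$ vanishes to order $2k$; and if $H$ vanishes to order $m\ge2$ at $y$, then $H''$ vanishes to order $m-2$ and $H'$ to order $m-1$, so $\mathcal{G}H=\alpha(1-x^2)H''-3\alpha xH'$ vanishes to order at least $m-2$. (The drift term lowers the order only by one and is harmless; the second-derivative term is decisive.) Consequently $(\mathcal{G}^sF_y)(y)=0$ whenever $2(k-s)\ge1$, i.e.\ for all $s=0,1,\dots,k-1$, which is exactly the vanishing of the coefficients of $t^0,\dots,t^{k-1}$ in $\Phi(y,t)$.

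It remains to collect the tail. Since $\Phi(y,t)=\sum_{s\ge k}\tfrac{t^s}{s!}(\mathcal{G}^sF_y)(y)$ is a finite linear combination of the exponentials $e^{-\alpha_n t}$ with $n\le2k$, and each $\mathcal{G}^sF_y$ is a polynomial in $(x,y)$ of degree bounded in terms of $k$ alone, hence bounded on $[-1,1]^2$, one gets $\Phi(y,t)=O(t^k)$ uniformly in $y$; integrating against $f_W$ gives $E(X_{t+\tau}-X_\tau)^{2k}=O(t^k)$. Tracking the lowest surviving term even pins down the constant: only the iterated second-derivative part contributes at $x=y$, and the telescoping product of the factors $m(m-1)$ for $m=2k,2k-2,\dots,2$ collapses to $(2k)!$, giving $(\mathcal{G}^kF_y)(y)=(2k)!\,\alpha^k(1-y^2)^k$ and hence $E(X_{t+\tau}-X_\tau)^{2k}\sim\frac{(2k)!}{k!}\alpha^k E\big((1-Y^2)^k\big)\,t^k$, which for $k=1$ agrees with the second-moment computation of the preceding section. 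The only places requiring genuine care are verifying that $\mathcal{G}$ preserves $\mathcal{P}_{2k}$ (this is what turns $P_t=e^{t\mathcal{G}}$ into a bona fide finite-dimensional identity and removes all analytic subtleties) and the bookkeeping in the order-of-vanishing induction; the reason the result holds for \emph{every} $k$ is precisely that $\mathcal{G}$ is exactly of second order, so $k-1$ applications cannot erode the multiplicity $2k$ of the zero of $F_y$ down to $0$.
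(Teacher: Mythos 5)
You should know at the outset that the paper does \emph{not} prove this statement: it is uttered there as a conjecture, supported only by the case $k=2$ (which follows from the continuity-coefficient computation (\ref{war})) and by symbolic Mathematica verification of the conditions (\ref{rown}) for $k=3,4,5$ using the coefficients $c_{i,n}$ from the expansions of $x^j$ in the $U_n$. Your argument is therefore not a variant of the paper's proof but an actual proof of the conjecture, and I believe it is correct. The key points all check: $U_n$ satisfies $(1-x^2)U_n''-3xU_n'+n(n+2)U_n=0$, so $\mathcal{G}=\alpha\bigl((1-x^2)\frac{d^2}{dx^2}-3x\frac{d}{dx}\bigr)$ has eigenvalues $-\alpha n(n+2)=-\alpha_n$ on the eigenbasis $U_n$, exactly the rates of the given SMPR; $\mathcal{G}$ does not raise degree, so on the finite-dimensional space $\mathcal{P}_{2k}$ the transition operator and $e^{t\mathcal{G}}$ agree on a basis and hence everywhere, which legitimizes the Taylor expansion of $\Phi(y,t)$ in $t$; and the order-of-vanishing induction is sound, since a second-order operator lowers the multiplicity of the zero of $(x-y)^{2k}$ at $x=y$ by at most two per application, giving $(\mathcal{G}^sF_y)(y)=0$ for $s\le k-1$. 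The one step you leave informal is the uniform tail bound: you should record that $|(\mathcal{G}^sF_y)(y)|\le CM^s$ with $C,M$ depending only on $k$, uniformly in $y\in[-1,1]$, because $\mathcal{G}$ is a fixed operator on $\mathcal{P}_{2k}$ and the coefficients of $F_y=\sum_{n\le 2k}b_n(y)U_n$ are polynomials in $y$ bounded on $[-1,1]$; then $\sum_{s\ge k}CM^st^s/s!=O(t^k)$ on bounded $t$-intervals, and integration against $f_W$ finishes the proof. Your leading constant $(\mathcal{G}^kF_y)(y)=(2k)!\,\alpha^k(1-y^2)^k$ is also right and is consistent with the paper's formula $E(X_\tau-X_{\tau+t})^2=2m_2(1-\exp(-\alpha_1 t))$, since $m_2=1/4$ and $\alpha_1=3\alpha$ give $\frac{3}{2}\alpha t$ both ways. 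Compared with the paper's route, which checks the moment identities (\ref{rown}) case by case, your generator argument explains structurally why they hold for every $k$: the rates $n(n+2)$ are eigenvalues of a degree-preserving second-order differential operator; the same template, using $(1-x^2)T_n''-xT_n'+n^2T_n=0$, proves the paper's companion conjecture for the arcsine case $\alpha_n=n^2\alpha_1$ as well.
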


\begin{example}
[$q-$\textbf{Normal distribution.}]\textbf{ }$q-$Normal is in fact a family of
distributions indexed by a parameter $q\in\lbrack-1,1]$. Its properties have
been described in many papers in particular the following review paper
\cite{Szab2020} where this distribution and the others related to it are
presented and various their applications in probability theory and
combinatorics are presented. It is defined as follows.\newline For $q=-1,$ it
is a discrete $2-$ point distribution, which assigns values $1/2$ to $-1$ and
$1$.\newline For $q\in\left(  -1,1\right)  ,$ it has density given by
\[
f_{N}(x|q)=\frac{\sqrt{1-q}}{2\pi\sqrt{4-(1-q)x^{2}}}\prod_{k=0}^{\infty
}\left(  (1+q^{k})^{2}-(1-q)x^{2}q^{k}\right)  \prod_{k=0}^{\infty}%
(1-q^{k+1}),
\]
for $\left\vert x\right\vert \leq\frac{2}{\sqrt{1-q}}$. In particular
$f_{H}\left(  x|0\right)  \allowbreak=\allowbreak\frac{1}{2\pi}\sqrt{4-x^{2}%
},$ for $|x|\leq2.$ Hence, it is a Wigner distribution with a radius $2$. For
$q=1,$ the $q$\emph{-Gaussian} distribution is the Normal distribution with
parameters $0$ and $1.$

It is also known that the family of polynomials orthogonal with respect to
$q-$Normal distribution are the so-called $q-$Hermite polynomials satisfying
the following three-term recurrence
\[
H_{n+1}(x|q)=xH_{n}(x|q)-[n]_{q}H_{n-1}(x|q),
\]
with $H_{-1}(x|q)\allowbreak=\allowbreak0$ and $H_{0}(x|q)\allowbreak
=\allowbreak1,$ where $[n]_{q}\allowbreak=\allowbreak1+\ldots+q^{n-1}$. It is
also known that
\[
\int_{-2/\sqrt{1-q}}^{2/\sqrt{1-q}}H_{n}(x|q)H_{m}(x|q)f_{N}(x|q)dx=\delta
_{nm}[n]_{q}!,
\]
where $[n]_{q}!\allowbreak=\allowbreak\prod_{i=0}^{n}[i]_{q}$. Recall that
$-1<q\leq1$ and that by the $q-$Normal distribution, we mean the one defined
by the following density. Following \cite{SzabMom} (Proposition 2) we know
that $m_{4}(q)\allowbreak=\allowbreak2+q$, $m_{3}(q)\allowbreak=\allowbreak0$
and $m_{2}(q)\allowbreak=\allowbreak1.$ Hence, $s\allowbreak=\allowbreak0$ and
$\kappa\allowbreak=\allowbreak q-1.$ Consequently, we have
\[
Cc=\alpha_{2}/\alpha_{1}=\frac{12+4(q-1)}{6+3(q-1)}=\frac{8+4q}{3+3q}.
\]
This parameter is greater than $2$ for all $q<1.$ Recall that $q\allowbreak
=\allowbreak1$ refers to the Gaussian case that was already analyzed. Then, as
it can be seen we have $Cc\allowbreak=\allowbreak2.$ Since for all cases other
than $1$ the possible values of parameter $q,$ the support of the measure
$f_{N}$ is bounded, equal to $S_{q}\allowbreak=\allowbreak\lbrack-2/\sqrt
{1-q},2/\sqrt{1-q}]$ the sequence $\left\{  \exp(-\alpha_{n}t)\right\}  $ may
not be a moment sequence. Hence, one can expect that for all choices of the
sequence $\left\{  \alpha_{n}\right\}  $ such that $\alpha_{2}\allowbreak
=\allowbreak\frac{8+4q}{3+3q}\alpha_{1}$ we can expect path continuity of the
given SMPR process. The only thing is to select the sequence of positive
numbers $\left\{  \alpha_{n}\right\}  _{n\geq1}$ with $\alpha_{2}$ given above
in such a way that the function given by (\ref{gest}) is nonnegative for all
$\left\vert x\right\vert ,\left\vert y\right\vert \leq\frac{2}{\sqrt{1-q}}$.

Note that the case $q\allowbreak=\allowbreak0$ refers to the Wigner case also
analyzed above (although with a different radius). What is more, as shown in
\cite{Bo} to each classical $q$-Normal (Gaussian) process one can define a
related process defined within the non-commutative probability setting. Among
those related processes defined within the non-commutative probability
setting, the case $q\allowbreak=\allowbreak0$ refers to the so-called free
probability. Thus, one can expect that our analysis concerning Wigner marginal
distribution presented above can help to construct free probability process
having a continuous path modification.
\end{example}

\end{document}